\def\etal{\mbox{\em{et al.}}}
\newcommand{\mb}[1]{\boldsymbol{#1}}
\newcommand{\E}[2]{\mathbb{E}_{#1}[{#2}]}
\newcommand{\EO}[1]{\mathbb{E}_{#1}}
\newcommand{\Prob}[1]{\mathbb{P}\{#1\}}
\newcommand{\qed}{\hfill$\blacksquare$}
\newcommand{\ind}[1]{\mathbbm{1}\{#1\}}
\newcommand{\orth}{\perp\!\!\!\perp}
\newcommand*\rfrac[2]{{}^{#1}\!/_{#2}}
\newcommand\numberthis{\addtocounter{equation}{1}\tag{\theequation}}
\newcommand{\figname}[1]{Figure~#1}
\newcommand{\thname}[1]{Theorem~#1}
\newcommand{\appname}[1]{Appendix~#1}
\newcommand{\eqname}[1]{Equation~#1}
\renewcommand{\env@cases}[1][@{}l@{\quad}l@{}]{%
  \let\@ifnextchar\new@ifnextchar
  \left\lbrace
  \def\arraystretch{1.5}%
  \array{#1}%
}
\newtheorem{thm}{Theorem}
\newtheorem{lem}{Lemma}
\begin{document}

 \issn{1366-5820}
\issnp{0020-7179} 
 \jnum{00} \jyear{2014} \jmonth{January}

\markboth{Kefayati, M. \& Baldick, R.}{Optimal Policies for Simultaneous Energy Consumption and AS Provisions}

\title{Optimal Policies for Simultaneous Energy Consumption and Ancillary Service Provision for Flexible Loads under Stochastic Prices and No Capacity Reservation Constraint}

\author{Mahdi Kefayati$^{\ast}$ and Ross Baldick$^{\dag}$
\thanks{The authors are with the ECE department of the University of Texas at Austin, 1 University Station Austin TX 78712.}
\thanks{$^\ast$Corresponding author. Email: \tt kefayati@utexas.edu}
\thanks{$^\dag$Email: \tt baldick@ece.utexas.edu}
}

\maketitle
\begin{abstract}

Flexible loads, i.e. the loads whose power trajectory is not bound to a specific one, constitute a sizable portion of current and future electric demand. This flexibility can be used to improve the performance of the grid, should the right incentives be in place.

In this paper, we consider the optimal decision making problem faced by a flexible load, demanding a certain amount of energy over its availability period, subject to rate constraints. The load is also capable of providing Ancillary Services (AS) by decreasing or increasing its consumption in response to signals from the Independent System Operator. Under arbitrarily distributed and correlated Markovian energy and AS prices, we obtain the optimal policy for minimizing expected total cost, which includes cost of energy and benefits from AS provision, assuming no capacity reservation requirement for AS provision. We also prove that the optimal policy has a multi-threshold form and can be computed, stored and operated efficiently. We further study the effectiveness of our proposed optimal policy and its impact on the grid.

We show that, while optimal simultaneous consumption and AS provision under real-time stochastic prices is achievable with acceptable computational burden, the impact of adopting such real-time pricing schemes on the network might not be as good as suggested by the majority of the existing literature. In fact, we show that such price responsive loads are likely to induce peak-to-average ratios much more than what is observed in the current distribution networks and adversely affect the grid.
\end{abstract}

\section{Introduction\label{sec:intro}}
Smart grids are expected to bring about fundamental changes in terms of information availability to electricity networks and provide bidirectional communication and energy exchange even to the end points of the distribution network. Although it is generally understood that the closed loop of information/energy exchange can improve the overall energy exchange process in many ways (e.g. reducing cost, increasing reliability), models to quantitatively understand the fundamental benefits of the information availability over smart grids are relatively lacking. 

A considerable portion of the current electricity demand is inherently flexible i.e., electric power need not be delivered to the load at a very specific trajectory over time. For these loads, an amount of energy is needed by some (potentially recurring) deadline. In other words, over the operation time of the load, the trajectory of the delivered power only needs to satisfy some constraints instead of exactly following a specific trajectory. Such loads include most heating/cooling systems and electric vehicles (EV). The most recent EIA statistics~\cite{end-use_2009} suggest that such loads comprise more than 50\% of average residential electricity consumption. The prospect of such loads comprising a considerable portion of emerging loads urges us to investigate the benefits obtained from such flexibility utilizing the infrastructure provided by smart grids. 

Many authors have studied the impacts of demand flexibility in various contexts~\cite{turitsyn_robust_2010, turitsyn_smart_2011, mohsenian-rad_optimal_2010, caramanis_management_2009, caramanis_dr_reg_2012, kefayati_efficient_2010, papavasiliou_supplying_2010} and demonstrated the potential benefits of demand flexibility. Considering a residential setting, Mohsenian-Rad \etal~\cite{mohsenian-rad_optimal_2010} propose a price-based load scheduling algorithm for smart grids. They consider a combination of real-time pricing (RTP) and inclining block rates (IBR) for prices. Moreover, to model delay aversion they inflate future prices exponentially. After formulating the deterministic model and proposing an optimal solution to it, they extended their proposed algorithm to the stochastic case by proposing a price estimation method, which would convert stochastic prices to deterministic equivalents for application of their deterministic algorithm. This \emph{certainty equivalent} approach fails to produce optimal responses and hence the performance of their method is bound by the performance of the price estimation technique they have proposed. Caramanis and Foster~\cite{caramanis_management_2009} consider simultaneous energy consumption and AS provision in the context of a load aggregator and propose an approximate stochastic programming approach to solve the intractable dynamic program resulting from their model. Kefayati and Caramanis~\cite{kefayati_efficient_2010} propose a computationally efficient approximate method for solving this problem. Taking a different approach to utilize demand flexibility, Papavasiliou \etal~\cite{papavasiliou_supplying_2010}, proposes coupling flexible loads with intermittent generation, particularly wind, to reduce the uncertainty of net production. A solution based on approximate dynamic programming is proposed and used to analyze the economics of coupling intermittent and flexible resources. The same problem is approached by Neely \etal~\cite{neely_efficient_2010} through Lyapunov optimization to obtain a computationally efficient approximate solution that guarantees \emph{order-wise} delay and cost performance.

The model we study in this work has close connections to storage assets and some of the benefits provided by storage assets can be achieved by flexible loads. Storage assets and their benefits have been studied by various authors~\cite{eyer_energy_2010, kefayati_cdc_2013, korpaas_operation_2003, denholm_value_2009, teleke_control_2009, divya_battery_2009, qin_optimal_2012, secomandi_optimal_2010}. Korpaas~\etal~\cite{korpaas_operation_2003} studied optimal operation of  a storage asset in combination of a wind farm to meet the output schedule using a forecast model. Similarly, Teleke~\cite{teleke_control_2009} studied smoothing strategies for the combined output of a wind farm and a battery storage asset. Denholm and Sioshansi~\cite{denholm_value_2009}, studied the value of storage asset when co-locating wind farms with Compressed Air Energy Storage (CASE). Secomandi~\cite{secomandi_optimal_2010} considered optimal policies for commodity trading with a   capacitated storage asset and concluded that, under some conditions, a multi-threshold policy is optimal. Faghih~\etal~\cite{faghih_economic_2012} examined ramp-constrained storage assets and particularly demonstrated that storage assets can improve price elasticity near average prices. Finally, Kefayati and Baldick~\cite{kefayati_cdc_2013} studied the problem of optimal storage asset operation under real-time prices and proved that under some conditions, a efficiently computable multi-threshold policy, similar to what is presented here, is optimal.

In \cite{kefayati_cdc_2012}, we considered flexible loads responding to real-time prices from the grid as a signal to reflect the more realistic cost of energy consumption. Yet, if the communication infrastructure and the dynamics of the loads allow, their flexibility can be further utilized by the grid. This would be achieved by the loads not only participating in the market for purchasing energy they need, but also for providing ancillary services. Under such scenarios, loads would not only respond to energy prices but also to requests for adjusting their consumption, at rates much faster than market or pricing intervals, which are typically issued by the ISO. Such adjustment signals are typically designed to fine tune supply demand mismatches by regulating system frequency, as we discussed them in \cite{kefayati_allerton_2012}. This form of service, however, needs much faster communication and capability but as we previously discussed, are well within rates available to residential and commercial Internet users. Such ancillary services are typically traded as \emph{capacity}, that is, the operator pays for the ability to adjust the current rate (consumption of generation) of the provider within a specified range (i.e. capacity).

In this paper, we consider an extension of the problem we considered in \cite{kefayati_cdc_2012} by adding the potential for offering ancillary services to the problem of optimal energy consumption by a flexible load. We first examine the model carefully and show that multiple cases are possible depending on the form of capacity constraints to which the load is subjected when changing consumption at paces faster than the decision interval. We then show that our previous techniques to obtain the optimal policy can be extended to joint optimization of energy consumption and AS provision and the optimal policy has a very similar multi-threshold form. We also show that the optimal policy can be computed efficiently in a similar recursive fashion and the complexity order of computations stays the same. Considering joint energy consumption and AS provision is not only a natural extension of the optimal consumption problem, but also play as a bridge towards the work on coordinated energy delivery, where an Energy Services Company (ESCo) is in charge of solving the optimal energy delivery and AS provision for a group of flexible loads. 

The rest of this paper is organized as follows: In Section~\ref{sec:coopt.stoch.model} we present and analyze our model and the optimal demand satisfaction algorithm. 
In sections \ref{sec:coopt.optimal.policy.nores} we present our proposed optimal joint consumption and AS provision policy for the case that AS provision does not need capacity reservation.  Section~\ref{sec:coopt.case.study} is dedicated to analysis of network level performance of our proposed algorithm through simulation as well as its system level impacts. We conclude the paper and comment on our future directions in Section~\ref{sec:coopt.conclusion}. The proof of the theorems are moved to the appendices for better readability.

\section{System Model\label{sec:coopt.stoch.model}}
We model a consumer with a flexible load subject to a total energy demand and energy rate limits. Assuming a discrete time model, our objective is to find out how much energy the consumer should consume towards its total demand so that its total expected cost is minimized. The consumer can also offer some of its flexible capacity for use as reserve in return for some reward.

Let us first introduce the price model. We assume a Markovian price structure for energy and reserve prices, denoted by $ \pi_{t}^{e} $ and $ \pi_{t}^{r} $ respectively. Defining $ \mb{\pi}_{t}\triangleq (\pi_{t}^{e}, \pi_{t}^{r}) $:
\begin{equation}\label{sys:coopt.price.evol}
\begin{array}{l}
\mb{\pi}_{t}=\mb{\lambda}_{t}(\mb{\theta}_{t})+\mb{\epsilon}_t,\qquad \mb{\theta}_{0}\text{ given},\\
\end{array}
\end{equation}
where $\mb{\epsilon}_t\triangleq(\epsilon_{t}^{e},\epsilon_{t}^{r})$ is the random vector capturing price innovations, $\mb{\lambda}_t(\mb{\theta_{t}})\triangleq(\lambda_{t}^{e}(\mb{\theta_{t}}),\lambda_{t}^{r}(\mb{\theta_{t}}))$ models inter-stage correlation of prices and seasonality and $ \mb{\theta}_{t} $ represents the state of the Markov process and throughout this work we assume $  \mb{\theta}_{t} =  \mb{\pi}_{t-1} $, i.e. the previous prices form the state of the price process. We denote the distribution function of $\mb{\epsilon}_t$ by $F_t(\bullet)$ and assume independent price innovations over time, i.e. $ \mb{\epsilon}_{t}\perp\!\!\!\perp\mb{\epsilon}_{t'},\:\forall t\neq t' $. Note that the prices are assumed to be non-stationary, generally distributed and arbitrarily correlated (between energy and reserves). Moreover, we assume $\mb{\lambda}_t(\mb{\theta}_{t})$ to be monotone but otherwise arbitrary to avoid some technicalities. 

\begin{figure}
\centering
\begin{tikzpicture}
  \draw[white] (-1,-1.5) rectangle (1,2.5);
  \node[minimum size=5em] (ev) at (0,0) {\pgfbox[center,center]{
  	\scalebox{-1}[1]{\includegraphics[width=5em]{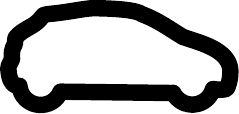}}}};
  \node at ($(ev)+(0,-0.8)$) {\small Flexible Load};
  \node[minimum size=1.8em] (gear) at (3,0) {\pgfbox[center,center]{\includegraphics[width=2em]{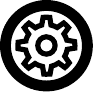}}};
  \node at ($(gear)+(0,-0.8)$) {\small Controller};
  \node[minimum size=7em] (cloud) at (8,0) {\pgfbox[center,center]{
  	\includegraphics[width=8em]{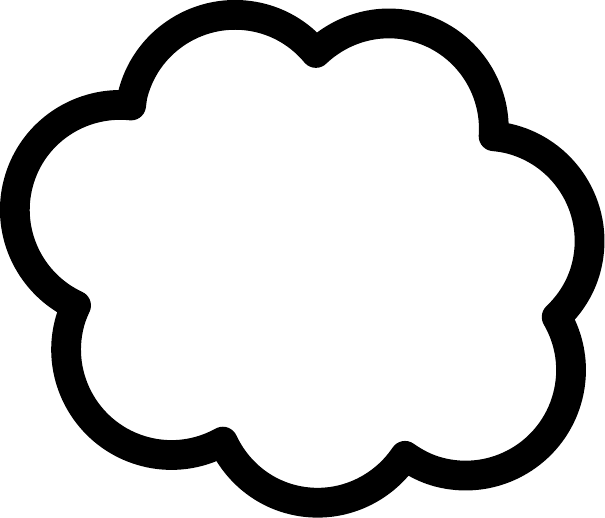}}};
  \node at (cloud) {Grid};
  \draw[-,line width=2pt] (ev) -- (gear);
  \draw[-,line width=2pt] (gear) -- (cloud);
  \draw[<-,line width=1pt] (gear) --  ($(gear)+(0,1.2)$) node[above right]{$\mb{\pi}_{t},\sigma_{\tau}$} -- ($(cloud)+(-0.6,1.2)$);
\end{tikzpicture}
\caption{System model.}
\label{fig:coopt.arch}
\end{figure}

We model the cost minimization problem faced by the consumer as a Dynamic Program (DP) (a.k.a. Markov Decision Process). Under this model, which is schematically depicted in Figure \ref{fig:coopt.arch}, the consumer is assumed to have a certain amount of energy demand at time $ t=0 $, and needs to make decisions about exactly how to consume electricity in the next $T$ time periods with the knowledge of past and current prices and (remaining) energy demand. In other words, the consumer has a deadline of $T$ time slots and seeks an optimal demand satisfaction policy. We assume that the consumer acts as a price taker; consequently, the optimal policy alternatively captures the consumer's bid for purchasing energy and offer for providing reserves. The consumer is also subject to consumption rate limits; that is, its consumption at every stage cannot exceed a certain amount. This models the capacity limits that a typical consumer is facing, from local transformer capacity limits to the rate supported by the device (e.g. charging capacity of a charger). 

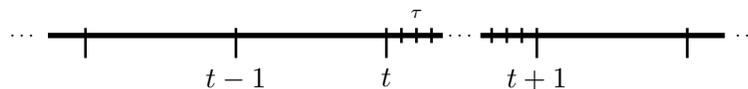
\begin{figure}[b]
\centering
\tikzstyle{main}=[-,line width=2pt]
\tikzstyle{sub}=[-,line width=1pt]
\tikzstyle{mid}=[-,dotted,line width=1pt]
\begin{tikzpicture}
  \draw[main] (0,0) -- (5.25,0);
  \draw[main] (5.75,0) -- (9,0);
  \node at (5.5,0) {\scriptsize\ldots};
  \node[left] at (0,0) {\scriptsize\ldots};
  \node[right] at (9,0) {\scriptsize\ldots};
  
  \foreach \x in {0,...,4}
  	\draw[sub] (0.5+2*\x,0.1) -- (0.5+2*\x,-0.3);
  \node[below] at (2.5,-0.3)  {$t-1$};
  \node[below] at (4.5,-0.3) {$t$};
  \node[below] at (6.5,-0.3) {$t+1$};

  \foreach \x in {1,2,3,7,8,9}
  	\draw[sub] (4.5+0.2*\x,0.1) -- (4.5+0.2*\x,-0.1);
  \node[above] at (4.9,0.1) {\scriptsize $\tau$};
\end{tikzpicture}
\caption{Timeline of decisions versus reserve deployments.}
\label{fig:coopt.tl}
\end{figure}

The consumer is also assumed to sell its flexibility, i.e. its ability to change its consumption rate, as a reserve capacity to the grid too. By modulating its output in a time frame faster than the decision intervals, see Figure \ref{fig:coopt.tl}, it provides a balancing service (a.k.a. reserve) to the grid. An example of such a situation in ERCOT is providing Regulation (REG) service in intervals of four seconds while the market (and hence energy prices) is cleared every five minutes. Since the fast deployments of the reserves is usually designed to manage the uncertainty between the anticipated supply demand balance and its actual behavior, it is fair to expect that reserve deployments have a zero mean and balance out over (relatively) long periods of time. Figure \ref{fig:coopt.sp} demonstrates a sample path of this operation where $e_t$, energy consumption by the flexible load, is modulated by reserve deployments ($ \sigma_\tau $). Note that the reserve deployments happen at about two orders of magnitude higher frequency and hence the time indexes are different. Since the consumer is assumed to be solely capable of consuming energy and not injecting it back, the amount of reserves offered by the consumer cannot exceed its consumption in that time slot. On the upper bound on consumption rate however, the actual constraint on the instantaneous rate depends on the underlying limiting factor and can be either an average rate constraint (over the time slot) or an instantaneous rate limit similar to the lower bound on consumption. For example, if the constraint on the maximum consumption rate is due to thermal limits, like a transformer, then typically the capacity can be modeled as an average rate limit per time slot. Consequently, while instantaneous changes in consumption (due to the summation of designated consumption and reserve deployments) can exceed the upper bound of the consumption rate, the average rate of consumption remains within desired limits since reserve deployments are assumed to be zero mean. This is the model we adopt in this paper.

Alternatively, the rate limit could be due to a hard limit and hence, the total consumption rate (the sum of designated energy consumption rate and reserve deployments) cannot exceed the rate limit. This alternate model basically asks the consumer to reserve some capacity for reserves that could otherwise be used for energy consumption and will result in a more complex optimal policy. We leave this case for future work.

\begin{figure}
\centering
\includegraphics[width=0.7\linewidth]{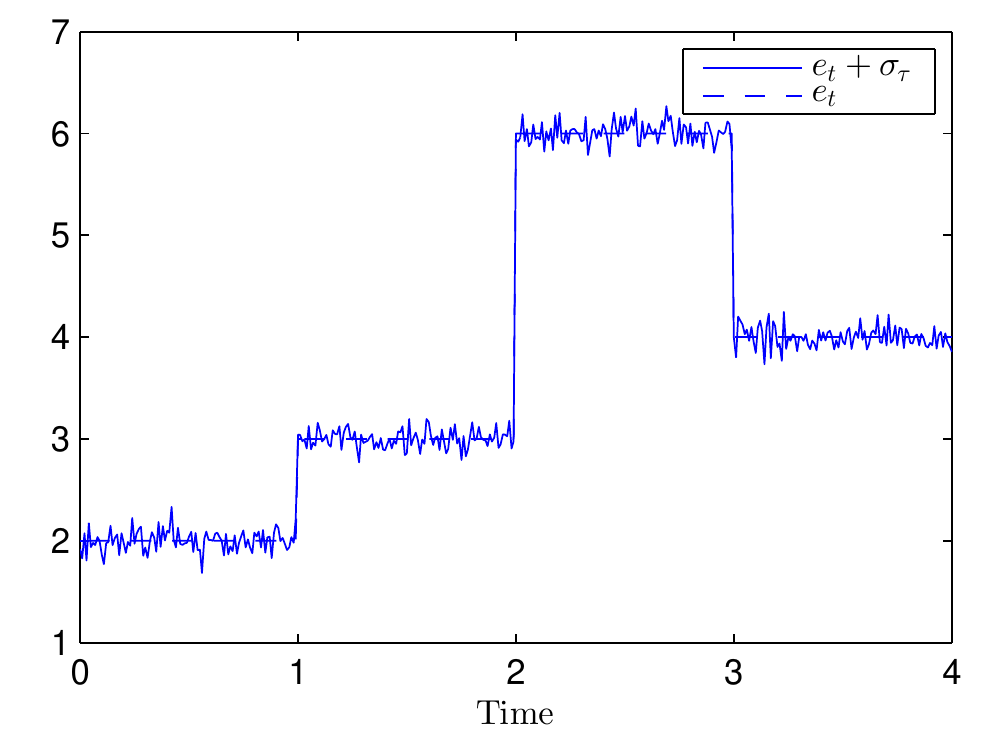}
\caption[Providing reserves while consuming energy, sample path.]{A sample path of energy consumption by the consumer while providing reserves.}
\label{fig:coopt.sp}
\end{figure}

With this model and denoting the remaining energy demand of the consumer by $ d_t $, and the amount of consumed energy and offered reserve by $ e_{t}$ and $ r_{t} $, respectively, we have the following dynamics: 
\begin{equation}\label{sys:coopt.orig}
\begin{array}{l}
d_{t+1}=d_t-e_t+s_t,\qquad d_0 \text{ given},\\
0\leq r_t \leq e_t\leq \min\{d_t,\overline{e}\},\\
\end{array}
\end{equation}
where $ d_0 $ denotes the initial energy demand of the flexible load, $ \overline{e} $ denotes the maximum amount of energy the consumer can consume in one time slot, $ s_t =\sum_{\tau} \sigma_{\tau}$ denotes the sum of reserve deployments over the time interval. Generally, $ s_t $ is assumed to be a zero mean independent process, i.e. $ \E{}{s_{t}}=0,\:\forall t $ and $ \mb{\epsilon}_{t}\orth s_{t}\orth s_{t'},\:\forall t\neq t' $; in this work, however, we assume $s_t=0,\:\forall t$ as a simplifying assumption. That is, we assume that reserve deployments over each time slot are balanced and sum up to zero. Note that in some jurisdictions, there are balanced reserve products that ensure a balanced property. 

The objective of the problem is to minimize the expected cost, i.e. to obtain $ J^*_0(d_0,\mb{\theta}_0) $:
\begin{equation}
J^*_0(d_0,\mb{\theta}_0){=}\min_{\substack{e_t(d_t,\mb{\pi}_t),\\ r_t(d_t,\mb{\pi}_t)}}\E{\mb{\epsilon}_t}{\sum_{t=0}^{T-1} g_t(e_t,r_t,\mb{\pi}_t){+} g_T(d_T)},
\end{equation}
where the minimization is over policies that give $e_t $ and $r_t$ upon observing $ (d_t,\mb{\pi}_t) $, which we denote by $e_t(d_t,\mb{\pi}_t), r_t(d_t,\mb{\pi}_t)$ by abuse of notation. Finally, we need to define our stage and final cost:
\begin{eqnarray}
g_t(e_t,r_t,\mb{\pi}_t)&=& \pi_{t}^{e} e_t-\pi_{t}^{r} r_t,\label{sys:coopt.step.cost}\\
g_T(d_T)&=&\hat{m}_T d_T,\label{sys:coopt.final.cost}
\end{eqnarray}
where $ m_T $ is basically the marginal cost of unsatisfied energy demand which depends on the type of the load. For example, for plug-in hybrid vehicles, $ \hat{m}_T $ is on the order of the equivalent price of gasoline. This model can capture a wide range of flexible loads. The prime example of such loads are battery charging loads like plug-in electric vehicles.

\section{Optimal Energy Consumption and Reserve Provision Policy without Capacity Reservation\label{sec:coopt.optimal.policy.nores}}
First, we present our main result which gives the optimal joint energy consumption and reserve provision policy. Then, we present the algorithm to recursively calculate the parameters of this optimal policy derived from the main theorem and discuss its implications and complexity. Finally, we establish another theorem which considers the result for the uncorrelated price case.

\begin{thm}\label{thm:coopt.uniform.cor}
Consider the system described in~\eqref{sys:coopt.price.evol}--\eqref{sys:coopt.final.cost}.
\begin{enumerate}
\renewcommand{\theenumi}{(\alph{enumi})}
\renewcommand{\labelenumi}{\theenumi}
\item The optimal value function is continuous, piecewise linear and convex with $T+1$ pieces given by:
\begin{equation}\label{eq:coopt.uniform.cap.val}
\begin{split}
J^*_0(d_0,\mb{\theta}_0)=&\sum_{j=0}^{T-1} m^{j}_0(\mb{\theta}_0)[(d_0-j\overline{e})^+{\wedge}\overline{e}]+m^{T}_0(\mb{\theta}_0)(d_0-T\overline{e})^{+},
\end{split}
\end{equation}
where $a\wedge b \triangleq \min\{a,b\}$ and $m^{i}_t(\mb{\theta}_t)$ is given by the following backward recursion:
\begin{equation}\label{eq:coopt.rec.uniform.cap}
m^{i}_t(\mb{\theta}_t)=\E{\mb{\epsilon}_t}{M_i(\mb{\theta}_t,\mb{\epsilon}_t)},
\end{equation}
where,
\begin{equation}\label{eq:coopt.def.uniform.M}
M_i(\mb{\theta}_t,\mb{\epsilon}_t){=}\begin{cases}[@{}l@{\qquad}r@{}l@{}]
m^{i}_{t+1}(\mb{\pi}_t) & \hat{m}^{i}_{t+1}\leq& \pi^a_t,\\
\pi^a_t & \hat{m}^{i-1}_{t+1}\leq& \pi^a_t < \hat{m}^{i}_{t{+}1},\\
m^{i-1}_{t+1}(\mb{\pi}_t) & &\pi^a_t < \hat{m}^{i-1}_{t+1},
\end{cases}
\end{equation}
$\mb{\pi}_t=\mb{\lambda}_t(\mb{\theta}_t)+\mb{\epsilon}_t$ by \eqref{sys:coopt.price.evol}, $\pi^a_t=\pi^e_t-(\pi^r_t)^+$, $m^{i}_T(\mb{\theta}_t)=\hat{m}^{i}_T=\hat{m}_T,\:\forall i$, $m^{0}_t(\mb{\theta}_t)=\hat{m}^{0}_t=-\infty,\:\forall t$  and $\hat{m}^{i}_t$ is given by:
\begin{equation}\label{eq:coopt.rec.m.eq}
\hat{m}^{i}_t=\{m^{i}_t(\mu_1,\mu_2)| m^{i}_t(\mu_1,\mu_2)=\mu_1-(\mu_2)^+\}.
\end{equation}

\item The optimal policy is given by:
\begin{eqnarray}
e^*_{t}(d,\mb{\pi}_{t})&=& (d-i^*\overline{e})^+ \wedge \overline{e},\label{eq:coopt.uniform.cap.policy.e}\\
r^*_{t}(d,\mb{\pi}_{t})&=& e^*_{t}(d,\mb{\pi}_{t})\ind{\pi_{t}^{r}>=0},\label{eq:coopt.uniform.cap.policy.r}
\end{eqnarray}
where $ \ind{\bullet} $ is the indicator function and,
\begin{equation}i^*=\max\{i|\hat{m}^{i}_{t}<\pi_{t}^{e}-(\pi_{t}^{r})^{+}\}.\end{equation}
\end{enumerate}
\end{thm}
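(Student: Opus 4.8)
The plan is to recognize the problem as a finite-horizon Markov decision process and to proceed by backward induction on the Bellman equation, taking the piecewise-linear convex form of the value function as the induction hypothesis. Writing the cost-to-go from the start of stage $t$, before the innovation $\mb{\epsilon}_t$ (hence the price $\mb{\pi}_t$) is revealed, the recursion reads
\begin{equation*}
J^*_t(d,\mb{\theta}_t)=\mathbb{E}_{\mb{\epsilon}_t}\Big[\min_{0\le r\le e\le\min\{d,\overline{e}\}}\big(\pi^e_t e-\pi^r_t r+J^*_{t+1}(d-e,\mb{\pi}_t)\big)\Big],
\end{equation*}
where I have used $s_t=0$ from \eqref{sys:coopt.orig} and $\mb{\theta}_{t+1}=\mb{\pi}_t$. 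The first step is to eliminate the reserve variable: since $r$ enters only through $-\pi^r_t r$ and is box-constrained by $0\le r\le e$, its minimizer is $r^*=e\,\ind{\pi^r_t\ge 0}$, which is exactly \eqref{eq:coopt.uniform.cap.policy.r}, and substituting it collapses the per-unit cost of consumption to the effective price $\pi^a_t=\pi^e_t-(\pi^r_t)^+$. The remaining problem is the scalar control $\min_{0\le e\le\min\{d,\overline{e}\}}\big(\pi^a_t e+J^*_{t+1}(d-e,\mb{\pi}_t)\big)$.

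Next I would run the induction. The base case $J^*_T(d)=\hat{m}_T d$ from \eqref{sys:coopt.final.cost} is linear, hence trivially of the claimed form with all slopes equal to $\hat{m}_T$. For the inductive step, assume $J^*_{t+1}(\cdot,\mb{\pi}_t)$ is continuous, convex and piecewise linear with breakpoints at the integer multiples of $\overline{e}$ and nondecreasing slopes $m^0_{t+1}\le m^1_{t+1}\le\cdots$ on consecutive pieces. Because the inner objective is a linear term plus a convex function of $d-e$, it is convex in $e$, so the inner minimization is a one-dimensional convex (base-stock / newsvendor-type) program: one keeps consuming while the marginal continuation cost, i.e.\ the local slope of $J^*_{t+1}$ at the residual demand $d-e$, exceeds $\pi^a_t$, and stops when that slope drops to $\pi^a_t$ or a constraint ($e=\overline{e}$ or $e=d$) becomes active. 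Tracking the marginal cost of the $i$-th piece of $\tilde{J}_t(d,\mb{\pi}_t)\triangleq\min_{0\le e\le\min\{d,\overline{e}\}}\big(\pi^a_t e+J^*_{t+1}(d-e,\mb{\pi}_t)\big)$ reproduces the three regimes of \eqref{eq:coopt.def.uniform.M}: for high $\pi^a_t$ the marginal unit is deferred and carries the continuation slope $m^i_{t+1}(\mb{\pi}_t)$; for intermediate $\pi^a_t$ it is consumed at the margin and carries cost $\pi^a_t$; and for low $\pi^a_t$ the piece is fully consumed, shifting it down by $\overline{e}$ so that it carries $m^{i-1}_{t+1}(\mb{\pi}_t)$. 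The key structural point is that, because consumption is capped at $\overline{e}$, the breakpoints of $\tilde{J}_t$ stay at the same integer multiples of $\overline{e}$; only the slopes move.

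With $\tilde{J}_t(\cdot,\mb{\pi}_t)$ piecewise linear on a fixed grid, the outer expectation is immediate: $J^*_t=\mathbb{E}_{\mb{\epsilon}_t}[\tilde{J}_t]$ inherits the same breakpoints and slopes $m^i_t(\mb{\theta}_t)=\mathbb{E}_{\mb{\epsilon}_t}[M_i(\mb{\theta}_t,\mb{\epsilon}_t)]$, which is \eqref{eq:coopt.rec.uniform.cap}; continuity and convexity pass through the expectation, and summing the fixed-breakpoint pieces telescopes into \eqref{eq:coopt.uniform.cap.val}. The threshold $\hat{m}^i_t$ of \eqref{eq:coopt.rec.m.eq} is then read off as the value of $\pi^a$ at which the $i$-th marginal-cost curve meets the effective-price line, and the optimizer of the scalar program drives the residual demand toward the base-stock target $i^*\overline{e}$, clipped to a step of at most $\overline{e}$, with $i^*=\max\{i:\hat{m}^i_t<\pi^a_t\}$; together with the reserve rule this gives \eqref{eq:coopt.uniform.cap.policy.e}--\eqref{eq:coopt.uniform.cap.policy.r}.

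The main obstacle is the implicit, self-referential nature of the thresholds combined with the fact that the continuation slopes $m^i_{t+1}(\mb{\pi}_t)$ depend on the entire correlated price vector rather than on the scalar $\pi^a_t$ alone. Two facts must be verified to close the induction cleanly: first, that the implicit equation \eqref{eq:coopt.rec.m.eq} defining $\hat{m}^i_t$ has a unique solution, so the case split in \eqref{eq:coopt.def.uniform.M} is well posed and glues continuously (the definition of $\hat{m}^i_t$ as the fixed point $m^i=\pi^a$ is exactly what matches the three branches at the switch points); and second, that $i\mapsto M_i$ is monotone so that $m^i_t\le m^{i+1}_t$ and convexity is preserved across the step. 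Both reduce to a monotonicity argument—the continuation marginal cost must cross the effective-price line exactly once and grow no faster than $\pi^a_t$ on the intermediate branch—which is precisely where the standing assumption that $\mb{\lambda}_t(\mb{\theta}_t)$ is monotone enters, letting the vector-valued comparison $m^i_{t+1}(\mb{\pi}_t)$ versus $\pi^a_t$ collapse to the single scalar threshold $\hat{m}^i_{t+1}$ and propagating the slope ordering through the expectation.
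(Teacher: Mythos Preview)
Your proposal is correct and follows essentially the same route as the paper: backward induction on the Bellman equation, elimination of $r$ via $r^*=e\,\ind{\pi^r_t\ge0}$ to reduce to the effective price $\pi^a_t$, then a three-case analysis of the scalar minimization in $e$ according to where $\pi^a_t$ falls relative to the neighboring continuation slopes, followed by expectation to obtain the slope recursion \eqref{eq:coopt.rec.uniform.cap}. The only stylistic difference is that the paper carries out the three-case split via explicit algebraic identities (its Lemmas~\ref{lem:coopt.fact} and~\ref{lem:coopt.e.split}) rather than your base-stock/marginal-cost language, and it, like you, flags but does not fully discharge the well-posedness of the fixed point \eqref{eq:coopt.rec.m.eq} and the preservation of convexity across the step.
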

The proof is provided in Appendix \ref{app:coopt.pf.uniform.cor}.

The value function obtained in Theorem \ref{thm:coopt.uniform.cor} shows an interesting property of the problem under study: even with arbitrary prices and correlation, the value function remains not only piecewise linear, but also, all the pieces have the same length, namely $\overline{e}$. More importantly, the number of pieces scales only linearly with the number of available time slots, $T$, as opposed to exponential scaling which typically happens in dynamic programming problems \cite{bertsekas_dynamic_2005}. Consequently, a relatively simple multi-threshold optimal policy for joint demand satisfaction and reserve provision can be obtained. Since such a policy is either to be implemented in embedded devices such as the EVSE, PEV, Home Energy Management System (HEMS) or thermostat or used to control a large group of flexible loads by a load aggregator, such scalability is very important. 

\begin{figure}
\centering
\tikzstyle{block} = [draw, fill=blue!20, rectangle, minimum height=3em, minimum width=5em]
\tikzstyle{signal} = [minimum width=2em]
\begin{tikzpicture}[auto, node distance=2em,>=latex',
                    skip loop/.style={to path={-- ++(0,#1) -| (\tikztotarget)}},
                    jump/.style={to path={-- ++(0,#1) -| ($(\tikztotarget)+(-1.5,-#1)$) -- ($(\tikztotarget.north)+(0,-#1)$) -- (\tikztotarget)}}
                    ]
    \matrix[row sep=2em,column sep=2em] {
    \node [signal] {\vdots};  & \node [signal] (fpu){}; & \node [signal] (restu){\vdots}; & & \\
    \node [signal] (mi1){$\hat{m}^{i-1}_{t}$}; &
    \node [block] (fp1) {$ (\bullet)^{i-1}=\bullet $}; &
    \node [signal] (mit1) {$ m^{i-1}_{t}(\mb{\theta}) $}; &
    \node [block] (M1) {$ M_{i-1}(\mb{\theta},\mb{\epsilon}) $}; & 
    \node [block] (E1) {$ \E{\mb{\epsilon}_t}{\bullet} $}; \\
    \node [signal] (mi){$\hat{m}^{i}_{t}$}; &
    \node [block] (fp) {$ (\bullet)^{i}=\bullet $}; &
    \node [signal] (mit) {$ m^{i}_{t}(\mb{\theta}) $}; &
    \node [block] (M) {$ M_i(\mb{\theta},\mb{\epsilon}) $}; &
    \node [block] (E) {$ \E{\mb{\epsilon}_t}{\bullet} $}; \\       
    \node [signal] {\vdots};  & & \node [signal] {\vdots}; & \node [signal] (restd) {}; & \\
    };
    \draw [->] (fp) -- (mi);
    \draw [->] (fp1) -- (mi1);
    \draw [->] (mit) -- (fp);
    \draw [->] (mit1) -- (fp1);
    \draw [->] (mit) -- (M);
    \draw [->] (mit1) -- (M1);
    \draw [->] (M) -- (E);
    \draw [->] (M1) -- (E1);
    \draw [->] (mit1) -- (M);    
    \draw [->] ($(fp.north east)!0.4!(fp.east)$) -- ($(M.north west)!0.4!(M.west)$);
    \draw [->] ($(fp1.south east)!0.2!(fp1.east)$) -- ($(M.north west)!0.2!(M.west)$);
    \draw [->] ($(fp1.north east)!0.4!(fp1.east)$) -- ($(M1.north west)!0.4!(M1.west)$);
    \draw [->,dashed] (restu) -- (M1);
    \draw [->,dashed] (mit) -- (restd);
    \draw [->,dashed] ($(fp.south east)!0.2!(fp.east)$) -- ($(restd.north west)!0.2!(restd.west)$);
    \draw [->,dashed] ($(fpu.south east)!0.2!(fpu.east)$) -- ($(M1.north west)!0.2!(M1.west)$);
    
    \path (E1) edge [->,skip loop=-2em] (mit1)
          (E) edge [->,skip loop=-2em] (mit);
\end{tikzpicture}
\caption[Recursive calculation of thresholds.]{Recursive calculation of value function coefficients and thresholds in block diagram form. Note that recursions are backward on $t$ and $(\bullet)^{j}=\bullet$ represents the corresponding solution to \eqref{eq:coopt.rec.m.eq}.}
\label{fig:coopt.blockdiagram.cor}
\end{figure}

While Theorem \ref{thm:coopt.uniform.cor} gives a closed form optimal policy and value function, it also encapsulates the essential richness of the problem due to the general price structure into the coefficients of the value function and the corresponding thresholds obtained through \eqref{eq:coopt.rec.m.eq}. Whether a closed form can be attained for these thresholds depends on the assumed price statistics. Nevertheless, Theorem \ref{thm:coopt.uniform.cor} gives a straightforward algorithm for calculating these thresholds. Figure \ref{fig:coopt.blockdiagram.cor} depicts the recursive algorithm that is used to calculate the coefficients and corresponding thresholds in a block diagram form, mainly around the $i$th element. This block diagram basically depicts \eqref{eq:coopt.rec.uniform.cap}, \eqref{eq:coopt.def.uniform.M} and \eqref{eq:coopt.rec.m.eq}. Let us go through these steps for more clarity, assuming time $t$ at the beginning:
\begin{enumerate}
\item Using $m^{j}_{t}(\mb{\theta})$, obtain $\hat{m}^{j}_{t}$ using \eqref{eq:coopt.rec.m.eq} for all $j$.
\item For all $j$ form the corresponding $M_j(\mb{\theta},\mb{\epsilon})$ using $m^{j}_{t}(\mb{\theta})$, $m^{j-1}_{t}(\mb{\theta})$, $\hat{m}^{j}_{t}$, $\hat{m}^{j-1}_{t}$ obtained in previous step and \eqref{eq:coopt.def.uniform.M}.
\item For all $j$ take the expected value of $M_j(\mb{\theta},\mb{\epsilon})$ and obtain $m^{j}_{t-1}(\mb{\theta})$ as in \eqref{eq:coopt.rec.uniform.cap}.
\item Repeat these steps letting $t=t-1$.
\end{enumerate}

A considerable advantage of the above algorithm for obtaining the optimal thresholds is that it can be implemented in parallel very efficiently. In particular, at each time $t$, all the $T$ pieces, indexed by $j$ can be calculated in parallel. This is implicitly reflected also in the steps described above, noting that each step happens for all $j$ simultaneously for a given time slot $ t $, without using any of the information corresponding to other time slots. This parallelism can also be seen in the parallel branches of the block diagram in Figure \ref{fig:coopt.blockdiagram.cor}.

The computational complexity of obtaining the optimal policy using Theorem \ref{thm:coopt.uniform.cor} is $O(\frac{T^2}{\delta})$, assuming the operations in equations \eqref{eq:coopt.rec.uniform.cap}, \eqref{eq:coopt.def.uniform.M} and \eqref{eq:coopt.rec.m.eq} are $ O(1) $ and the resolution of $m^{j}_{t}(\mb{\theta})$ in $ \mb{\theta} $ is $ O(\delta) $. Note that $m^{j}_{t}(\mb{\theta})$ is a function of $ \mb{\theta} $ and, hence, at worst, it needs to be calculated and stored numerically. Given the above discussion on the parallel computation of the optimal policy, establishing this bound is straight forward. Note that we have at most $O(T)$ pieces and $T$ time slots. Bear in mind that assuming $ O(1) $ computation time on each operation only implies that these computations do not depend on $T$ and hence, do not scale with the problem.

In terms of storage complexity of the optimal policy, Theorem \ref{thm:coopt.uniform.cor} gives an even better result. Note that once \eqref{eq:coopt.rec.m.eq} is solved, there is no need to store $m^{j}_{t}(\mb{\theta})$ since $\hat{m}^{j}_{t}$ is enough to run the optimal policy. Therefore, the storage complexity of the optimal policy is $O(T^2)$, which is great for embedded systems or when the optimal policy is computed in the cloud and should be transferred to the controller.

Another advantage of the proposed computational structure becomes vivid when multiple flexible loads are involved and an aggregator is controlling all the loads simultaneously. In such setups, the load aggregator can reuse the calculated coefficients and thresholds for the loads which share the same (absolute) deadline and capacity. To observe this property, first note that the loads with the same per stage capacity have the same break points in their value function. Now, consider two loads with potentially different demands, say $ d $ and $ d' $ and potentially different dwell lengths, e.g. $ T $ and $ T' $, but with the same deadline, i.e. $t_{d}=t'_{d}$. Now, for calculating the optimal coefficients backward, the actual statistics seen by these loads are the same for all times between $t_{d}=t'_{d}$ and $ t_{d}-\min\{T,T'\} $ since their deadlines are equal. Note that since the times are measured relative to arrival times, the time indexes for these two loads might not be the same. For the period where both loads share the same statistics, i.e. between $t_{d}-\min\{T,T'\}$ and $ t_{d} $, the coefficients obtained in backward recursion discussed in the above algorithm are the same. Note that the coefficients, $m^{j}_{t}(\mb{\theta})$, do not depend on the demand. Hence, the load aggregator essentially needs to calculate these coefficients and optimal thresholds only once per deadline and load capacity, for the maximum amount of load dwell time. Moreover, in case a new load arrives sharing deadline and capacity with another load but staying for more time, computations are needed to be performed only for the extra amount of steps this load stays compared to longest staying load with the same deadline and capacity. That is, the optimal policy parameters only get augmented by the ones that are not calculated before arrival of a new load.

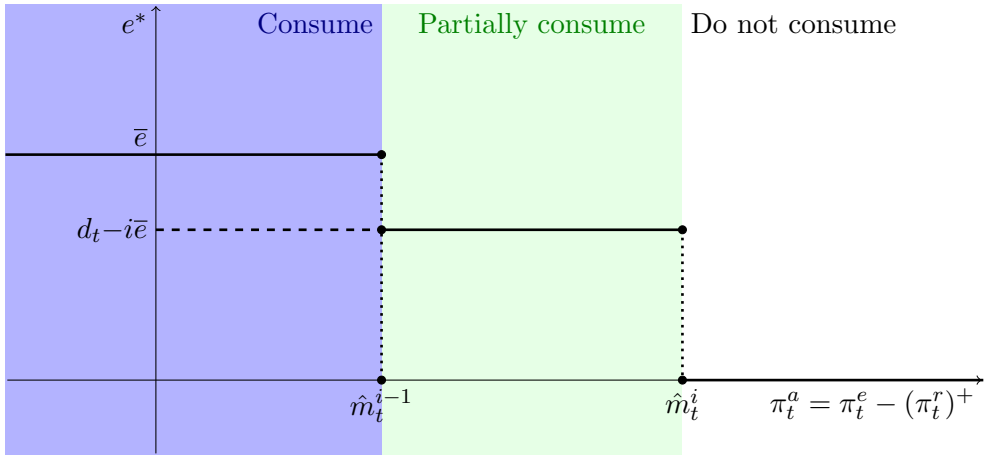
\begin{figure}
\centering
\tikzstyle{int}=[draw, minimum size=2em]
\tikzstyle{pt}=[]
\tikzstyle{plt}=[-,line width=1pt]
\tikzstyle{gd}=[-,dashed,line width=1pt]
\tikzstyle{gdy}=[-,dotted,line width=1pt]
\tikzstyle{nd}=[fill]
\begin{tikzpicture}[
	scale=1, transform shape,
	every node/.style={inner sep=0pt}]
\node [pt] (ptx0) at (-2,0) {};
\node [pt] (ptx1) at (1.5,0) {};
\node [pt] (ptx2) at (3,0) {};
\node [pt] (ptx3) at (7,0) {};
\node [pt] (ptx4) at (11,0) {};
\node [pt] (pty0) at (0,-1) {};
\node [pt] (pty1) at (0,-1) {};
\node [pt] (pty2) at (0,-1) {};
\node [pt] (pty3) at (0,2) {};
\node [pt] (pty4) at (0,3) {};
\node [pt] (pty5) at (0,5) {};
%
\path [fill=green!10,fill opacity=0.8,draw opacity=0] ($(ptx2)+(pty0)$) -- ($(ptx2)+(pty5)$) -- ($(ptx3)+(pty5)$) -- ($(ptx3)+(pty0)$) -- cycle;
\path [fill=blue!30,fill opacity=0.8,draw opacity=0] ($(ptx0)+(pty0)$) -- ($(ptx0)+(pty5)$) -- ($(ptx2)+(pty5)$) -- ($(ptx2)+(pty0)$) -- cycle;
\node [below left, inner sep=3pt, color=blue!50!black] at ($(ptx2)+(pty5)$) {Consume};
\node [below, inner sep=3pt, color=green!50!black] at ($(ptx2)!0.5!(ptx3)+(pty5)$) {Partially consume};
\node [below right, inner sep=3pt, color=black] at ($(ptx3)+(pty5)$) {Do not consume};
\draw [->] (ptx0) -- (ptx4);
\draw [->] (pty0) -- (pty5);
\node [below left, inner sep=3pt] at (pty5) {$e^*$};
\node [below left, inner sep=3pt] at (ptx4) {$\pi_t^{a}=\pi_t^{e}-(\pi_t^{r})^{+}$};
\draw [plt] ($(ptx0)+(pty4)$) -- ($(ptx2)+(pty4)$);
\draw [plt] ($(ptx2)+(pty3)$) -- ($(ptx3)+(pty3)$);
\draw [plt] ($(ptx3)$) -- ($(ptx4)$);
\draw [gd]  ($(pty3)$) -- ($(ptx2)+(pty3)$);
\draw [gdy]  ($(ptx2)+(pty4)$) -- ($(ptx2)$);
\draw [gdy]  ($(ptx3)+(pty3)$) -- ($(ptx3)$);
\draw [nd] ($(ptx2)+(pty3)$) circle [radius=1.5pt];
\draw [nd] ($(ptx3)+(pty3)$) circle [radius=1.5pt];
\draw [nd] ($(ptx2)+(pty4)$) circle [radius=1.5pt];
\draw [nd] (ptx2) circle [radius=1.5pt];
\draw [nd] (ptx3) circle [radius=1.5pt];
\node [left, inner sep=3pt] at (pty3) {$d_{t}{-}i\overline{e}$};
\node [above left, inner sep=3pt] at (pty4) {$\overline{e}$};
\node [below, inner sep=3pt] at (ptx2) {$\hat{m}^{i-1}_{t}$};
\node [below, inner sep=3pt] at (ptx3) {$\hat{m}^{i}_{t}$};
\end{tikzpicture}
\caption{Optimal policy.}
\label{fig:coopt.opt.policy}
\end{figure}

The optimal coefficients in the value function and their corresponding thresholds also have a very interesting economic interpretation. Consider a load with demand $d$ and capacity $\overline{e}$ and let us define $ i=\lfloor\frac{d}{\overline{e}}\rfloor$, then for any $t$, $m^{i}_{t}(\mb{\theta})$ basically gives the expected effective marginal cost of energy for this load, noting that the expected value function is the expected cost to go for such a load. This economic interpretation of the optimal coefficients results in a very intuitive interpretation of the optimal policy: It basically says consume if the effective price, $ \pi^{a}_{t} $, i.e. energy price minus reserve price if positive, is better than what you believe as the expected effective marginal cost of consumption (at the level of remaining demand upon finishing consumption). If the effective price is only better than the current expected effective marginal cost to go, then only consume enough to satisfy the partial demand. Finally, if the effective price is higher than the current expected effective marginal cost, then do not consume. \figname{\ref{fig:coopt.opt.policy}} depicts this interpretation. Note that the price axis in this figure, and the price considered in the optimal policy, is the effective price which depends on price of energy as well as price of reserve. \figname{\ref{fig:coopt.price.regions}} depicts price regions corresponding to the three effective price regions in \figname{\ref{fig:coopt.opt.policy}} more vividly. Note that these regions are exactly the ones defined by the conditional function $ M_i(\mb{\theta},\mb{\epsilon}) $ in \eqref{eq:coopt.def.uniform.M}.


\begin{figure}
\centering
\tikzstyle{int}=[draw, minimum size=2em]
\tikzstyle{pt}=[minimum size=0em]
\tikzstyle{plt}=[-,line width=1pt]
\tikzstyle{gd}=[-,dashed,line width=1pt]
\tikzstyle{gdy}=[-,dotted,line width=1pt]
\tikzstyle{nd}=[fill]
\begin{tikzpicture}[
	scale=1, transform shape,
	every node/.style={inner sep=0pt}]
\node [pt] (ptx0) at (-2,0) {};
\node [pt] (ptx1) at (2,0) {};
\node [pt] (ptx2) at (4,0) {};
\node [pt] (ptx4) at (7,0) {};
\node [pt] (pty0) at (0,-1) {};
\node [pt] (pty1) at (0,2) {};
\node [pt] (pty2) at (0,4) {};
\node [pt] (pty5) at (0,5) {};

\path [fill=blue!40,fill opacity=0.8,draw opacity=0] ($(ptx0)+(pty0)$) -- ($(ptx0)$) -- ($(ptx1)$)-- ($(ptx1)+(pty0)$)  -- cycle;
\path [fill=blue!50,fill opacity=0.8,draw opacity=0] ($(ptx0)$) -- ($(ptx1)!2!(pty1)$) -- ($(ptx1)$) -- cycle;
\path [fill=green!30,fill opacity=0.8,draw opacity=0] ($(ptx1)!2!(pty1)$) -- ($(ptx0)+(pty5)$) -- ($(ptx2)!1.25!(pty2)$) -- ($(ptx2)$) --  ($(ptx1)$) -- cycle;
\path [fill=green!20,fill opacity=0.8,draw opacity=0] ($(ptx2)$) -- ($(ptx2)+(pty0)$) -- ($(ptx1)+(pty0)$) -- ($(ptx1)$) -- cycle;
\path [fill=black!10,fill opacity=0.8,draw opacity=0] ($(ptx2)$) -- ($(ptx2)+(pty0)$) -- ($(ptx4)+(pty0)$) -- ($(ptx4)$) -- cycle;
\draw [->] (ptx0) -- (ptx4);
\draw [->] (pty0) -- (pty5);
\node [below left, inner sep=3pt] at (pty5) {$\pi_{t}^{r}$};
\node [below left, inner sep=3pt] at (ptx4) {$\pi_{t}^{e}$};
\draw [plt] ($(ptx1)+(pty0)$) -- ($(ptx1)$) -- ($(ptx1)!2!(pty1)$);
\draw [plt] ($(ptx2)+(pty0)$) -- ($(ptx2)$) -- ($(ptx2)!1.25!(pty2)$);
\draw [nd] (ptx1) circle [radius=1.5pt];
\draw [nd] (ptx2) circle [radius=1.5pt];
\draw [nd] (pty1) circle [radius=1.5pt];
\draw [nd] (pty2) circle [radius=1.5pt];
\node [above right, inner sep=3pt] at (ptx1) {$\hat{m}^{i-1}_{t}$};
\node [above right, inner sep=3pt] at (ptx2) {$\hat{m}^{i}_{t}$};
\node [above right, inner sep=3pt] at (pty1) {$\hat{m}^{i-1}_{t}$};
\node [above right, inner sep=3pt] at (pty2) {$\hat{m}^{i}_{t}$};
\end{tikzpicture}
\caption{Price regions.}
\label{fig:coopt.price.regions}
\end{figure}
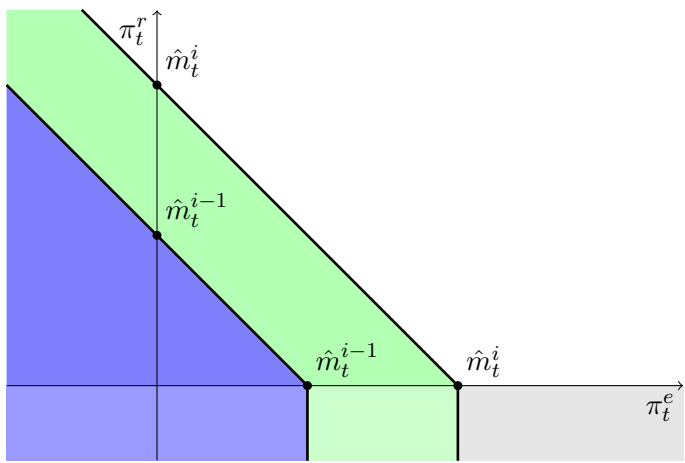

If further assumptions are made about the price statistics, the computational complexity of the optimal policy can be further improved. An interesting case in this direction is the independent price case, which also covers deterministic prices. 
We address the independent case in the following theorem:

\begin{thm}\label{thm:coopt.uniform.indep}
Consider the system described in~\eqref{sys:coopt.price.evol}--\eqref{sys:coopt.final.cost} and assume $\mb{\lambda}_t(\mb{\theta})=0,\: \forall t,\mb{\theta}$, then, the optimal value function given in Theorem~\ref{thm:coopt.uniform.cor} simplifies to:
\begin{equation}\label{eq:coopt.uniform.cap.val.indep}
J^*_0(d_0)=\sum_{j=0}^{T-1} \hat{m}^{j}_0[(d_0-j\overline{e})^+{\wedge}\overline{e}]+\hat{m}^{T}_0(d_0-T\overline{e})^{+},
\end{equation}
where $m^{i}_t$ is given by:
\begin{equation}\label{eq:coopt.rec.m.eq.indep}
\hat{m}^{i}_t=\hat{m}^{i}_{t+1} - G_{t}(\hat{m}^{i-1}_{t+1},\hat{m}^{i}_{t+1}),
\end{equation}
in which, 
\begin{equation}\label{eq:coopt.def.G2}
G_t(z,z')\triangleq\int_{z}^{z'} F^a_t(\zeta)\,d\zeta,
\end{equation}
where $ F^a_t(\bullet) $ is the marginal probability distribution function of the effective price random variable defined as $ \pi^a_t=\epsilon^a_t\triangleq \epsilon_t^{e}-(\epsilon_t^{r})^{+}$.

Moreover, the optimal policy is given by given by the same policy as in Theorem~\ref{thm:coopt.uniform.cor}.
\end{thm}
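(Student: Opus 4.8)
The plan is to derive Theorem~\ref{thm:coopt.uniform.indep} directly from Theorem~\ref{thm:coopt.uniform.cor} by specializing to $\mb{\lambda}_t(\mb{\theta})=0$. Under this assumption \eqref{sys:coopt.price.evol} collapses to $\mb{\pi}_t=\mb{\epsilon}_t$, so the state $\mb{\theta}_t=\mb{\pi}_{t-1}$ carries no predictive information and the effective price reduces to $\pi^a_t=\pi^e_t-\pp{\pi^r_t}=\epsilon^e_t-\pp{\epsilon^r_t}=\epsilon^a_t$. The central claim I would establish is that, in this regime, the coefficients $m^i_t(\mb{\theta}_t)$ lose their dependence on $\mb{\theta}_t$ and coincide with the constants $\hat{m}^i_t$; everything else is then a substitution into the formulas of Theorem~\ref{thm:coopt.uniform.cor}.

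First I would prove the $\mb{\theta}$-independence by backward induction on $t$. The base case $t=T$ is immediate since $m^i_T(\mb{\theta}_T)=\hat{m}_T$ is already constant. For the inductive step, assume $m^i_{t+1}(\cdot)\equiv\hat{m}^i_{t+1}$ for every $i$. Then in \eqref{eq:coopt.def.uniform.M} the feedback terms $m^i_{t+1}(\mb{\pi}_t)$ and $m^{i-1}_{t+1}(\mb{\pi}_t)$ become the constants $\hat{m}^i_{t+1}$ and $\hat{m}^{i-1}_{t+1}$, independent of the realized $\mb{\pi}_t$. Consequently $M_i(\mb{\theta}_t,\mb{\epsilon}_t)$ is nothing but the effective price $\epsilon^a_t$ clipped to the interval $[\hat{m}^{i-1}_{t+1},\hat{m}^i_{t+1}]$, i.e. $M_i=\min\{\max\{\epsilon^a_t,\hat{m}^{i-1}_{t+1}\},\hat{m}^i_{t+1}\}$, a deterministic function of $\epsilon^a_t$ alone. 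Taking the expectation in \eqref{eq:coopt.rec.uniform.cap} therefore yields a quantity that depends only on the marginal law $F^a_t$ of $\epsilon^a_t$ and not on $\mb{\theta}_t$, completing the induction. Since $m^i_t$ is now constant, the defining relation \eqref{eq:coopt.rec.m.eq} is satisfied trivially (any $(\mu_1,\mu_2)$ on the surface $\mu_1-\pp{\mu_2}=m^i_t$ works) and gives $\hat{m}^i_t=m^i_t$.

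It remains to evaluate the expectation of the clipped variable. Writing $a=\hat{m}^{i-1}_{t+1}$ and $b=\hat{m}^i_{t+1}$, with $a\le b$ (a monotonicity that follows from the convexity of the value function asserted in Theorem~\ref{thm:coopt.uniform.cor}(a), equivalently from the nondecreasing slopes $m^{i-1}_{t+1}\le m^i_{t+1}$), I would invoke the tail-integral identity $\E{}{\min\{\max\{X,a\},b\}}=a+\int_a^b(1-F^a_t(\zeta))\,d\zeta=b-\int_a^b F^a_t(\zeta)\,d\zeta$, valid for any random variable $X$ with CDF $F^a_t$. This gives exactly
\[
\hat{m}^i_t=\hat{m}^i_{t+1}-\int_{\hat{m}^{i-1}_{t+1}}^{\hat{m}^i_{t+1}}F^a_t(\zeta)\,d\zeta=\hat{m}^i_{t+1}-G_t(\hat{m}^{i-1}_{t+1},\hat{m}^i_{t+1}),
\]
which is \eqref{eq:coopt.rec.m.eq.indep}. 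Substituting the constants $\hat{m}^j_0$ for $m^j_0(\mb{\theta}_0)$ in \eqref{eq:coopt.uniform.cap.val} produces the value function \eqref{eq:coopt.uniform.cap.val.indep}, and because the policy in \eqref{eq:coopt.uniform.cap.policy.e}--\eqref{eq:coopt.uniform.cap.policy.r} is stated purely in terms of the thresholds $\hat{m}^i_t$ and the effective price, it carries over verbatim.

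I expect the only genuinely substantive step to be the expectation computation, i.e. recognizing $M_i$ as a clipping operator and applying the tail-integral identity; the $\mb{\theta}$-independence, while conceptually the crux, is a short induction once the feedback terms are seen to become constant. A minor point to verify carefully is the ordering $\hat{m}^{i-1}_{t+1}\le\hat{m}^i_{t+1}$, needed so that the middle branch of \eqref{eq:coopt.def.uniform.M} is non-vacuous and the clipping interpretation is valid; this should be inherited from the convexity and slope-monotonicity already established for Theorem~\ref{thm:coopt.uniform.cor}.
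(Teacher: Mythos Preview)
Your proposal is correct and follows essentially the same route as the paper: first argue that under $\mb{\lambda}_t\equiv 0$ the coefficients $m^i_t(\mb{\theta}_t)$ lose their $\mb{\theta}_t$-dependence (so $m^i_t=\hat m^i_t$), then compute $\E{}{M_i}$ to obtain the recursion. The only cosmetic difference is that you package the expectation step as the tail-integral identity for a clipped random variable, whereas the paper splits $\E{}{M_i}$ over the three events $\mathcal E_1,\mathcal E_2,\mathcal E_3$ and applies integration by parts to the middle term $\int_a^b \zeta\,dF^a_t(\zeta)$; these are the same computation.
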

The proof is provided in Appendix \ref{app:coopt.pf.uniform.indep}.

A corollary of Theorem~\ref{thm:coopt.uniform.indep}, is that the computational and storage complexity of the description of the optimal policy and value function is $\Theta(T)$ for each $t$, and hence $\Theta(T^2)$. This is assuming that evaluating $G_t(\bullet,\bullet)$ function is $\Theta(1)$, which is in line with other assumptions we had in the general case.

Similar to the general case, this solution can be implemented in $T$ parallel processes in a very straightforward way essentially resulting in a $ (T+1) \times (T+1) $ table that represents the optimal thresholds for every $i$ and $t$. The implementation complexity of each branch, however, is reduced dramatically. Figure \ref{fig:coopt.blockdiagram.indep} depicts the block diagram form of the optimal threshold calculation algorithm.

As discussed, the independent case covers the deterministic price case. In deterministic case finding the optimal consumption and reserve provision amounts is a straightforward optimization problem, Theorem \ref{thm:coopt.uniform.indep} gives a quick, recursive and parallel method to obtain the optimal policy independent of the actual amount of demand. This is particularly useful for an aggregator which would calculate the policy once and reuse it for many loads as discussed in the general case. Note that in deterministic case, $G_t(\bullet,\bullet)$ can be calculated in closed form very easily as:
\begin{equation}
G_t(z,z')=(z'-(z\vee\epsilon^a_t))^+,
\end{equation}
where $ (a\vee b) \triangleq \max\{a,b\} $ and $ \epsilon^a_t=\epsilon^e_t-(\epsilon^r_t)^+ $ is a given deterministic number. 

\begin{figure}
\centering
\tikzstyle{block} = [draw, fill=blue!20, rectangle, minimum height=3em, minimum width=5em]
\tikzstyle{signal} = [minimum width=2em]
\tikzstyle{anot} = [minimum width=0em, inner sep=0mm, outer sep=0mm]
\tikzstyle{sum} = [draw,circle,inner sep=0mm,minimum size=2mm]
\tikzstyle{dot} = [draw,circle,inner sep=0mm,minimum size=1mm, fill=black]
\begin{tikzpicture}[auto, node distance=2em,>=latex',
                    skip loop/.style={to path={-- ++(0,#1) -| (\tikztotarget)}},
                    jump/.style={to path={-- ++(0,#1) -| ($(\tikztotarget)+(-1.5,-#1)$) -- ($(\tikztotarget.north)+(0,-#1)$) -- (\tikztotarget)}}
                    ]
    \matrix[row sep=1em,column sep=4em] {
    \node [signal] {\vdots}; & \node [signal] (restu){}; & \node [signal] {\vdots}; \\
    \node [signal] (mi1){$\hat{m}^{i-1}_{t}$}; &
    \node [dot] (d1) {};&
    \node [block] (E1) {$ G_t(\bullet,\bullet)$}; \\
    &
    \node [sum] (s1) {\tiny $+$};&
    \\
    \node [signal] (mi){$\hat{m}^{i}_{t}$}; &
    \node [dot] (d) {};&
    \node [block] (E) {$ G_t(\bullet,\bullet) $}; \\       
    &
    \node [sum] (s) {\tiny $+$};&
    \\       
    \node [signal] {\vdots}; & & \node [signal] (restd) {\vdots}; \\
    };
    \draw [->] (mi) -- (d);
    \draw [->] (d) -- (E);
    \draw [->] (d) -- (s);
    \draw [->] (E) |- node[pos=0.98] {\tiny $-$} (s);
    \draw [->] (s) -| (mi);
    \draw [->] (mi1) -- (d1);
    \draw [->] (d1) -- (E1);
    \draw [->] (d1) -- (s1);
    \draw [->] (E1) |- node[pos=0.98] {\tiny $-$} (s1);
    \draw [->] (s1) -| (mi1);   
    \draw [->] (s1) |- ($(E.north west)!0.5!(E.west)$);
    \draw [->,dashed] (restu) |- ($(E1.north west)!0.5!(E1.west)$);
    \draw [->,dashed] (s) |- ($(restd.north west)!0.5!(restd.west)$);    
\end{tikzpicture}
\caption[Recursive calculation of thresholds, independent price case.]{Recursive calculation of optimal thresholds under price independence assumption as described by \eqref{eq:coopt.rec.m.eq.indep}.}
\label{fig:coopt.blockdiagram.indep}
\end{figure}


\section{Numerical Analysis\label{sec:coopt.case.study}}
Although we have proven the optimality of the proposed algorithms mathematically, we still need to establish the improvements of optimal response and also compare it to the no AS case we studied in \cite{kefayati_cdc_2012}. To this end, we use a similar setup as in \cite{kefayati_cdc_2012} where we studied the cost improvements for PEV loads based on the dataset and charging session patterns obtained in \cite{kefayati_allerton_2012}.

In a same setup as in \cite{kefayati_cdc_2012}, we considered 10,000 scenarios in which a group of 1000 PHEVs which show up over a 24 hour period of time. Energy demand, arrival and departure patterns are based on the results of our study in \cite{kefayati_allerton_2012} using the transportation dataset published by NREL \cite{nrel_transportation}, assuming a minimum dwell time of three hours and availability of charging everywhere under a non-anticipative model \cite{kefayati_itec_2014}. Dwell times of the loads are truncated to 24 hour for ease of calculations.

Energy price statistics are based on the real-time market prices in the Houston Load Zone for year 2012 and only independent case is considered for simplicity. The prices are assumed to be normally distributed with mean from average Houston Load Zone for each hour and simulations are done for various price uncertainties reflected in the standard deviation of price realizations, denoted by $\sigma$. Price uncertainty here can be be interpreted as Gaussian price estimation error from the flexible loads perspective. AS prices are based on average of ERCOT REG Up and REG Down prices and are assumed to be known to the load because AS prices are typically obtained in the day-ahead market and hence are available at real-time.

For the comparative cost performance study, we have depicted the normalized per charging session costs in \figname{\ref{fig:coopt.cost.comp}}, normalized by the no-AS case. The comparison  is done against no-AS optimal algorithm, the static price responsive CEC based method we introduced in \cite{kefayati_cdc_2012}, as well as price oblivious methods we studied in \cite{kefayati_allerton_2012}: immediate and AR charging. Moreover, to compare the network level impact of AS providing optimal response, we have plotted the average diurnal pattern of load produced by the above mentioned charging policies in \figname{\ref{fig:coopt.load.comp}}.

\begin{figure}[t]
\centering
\includegraphics[width=0.99\linewidth]{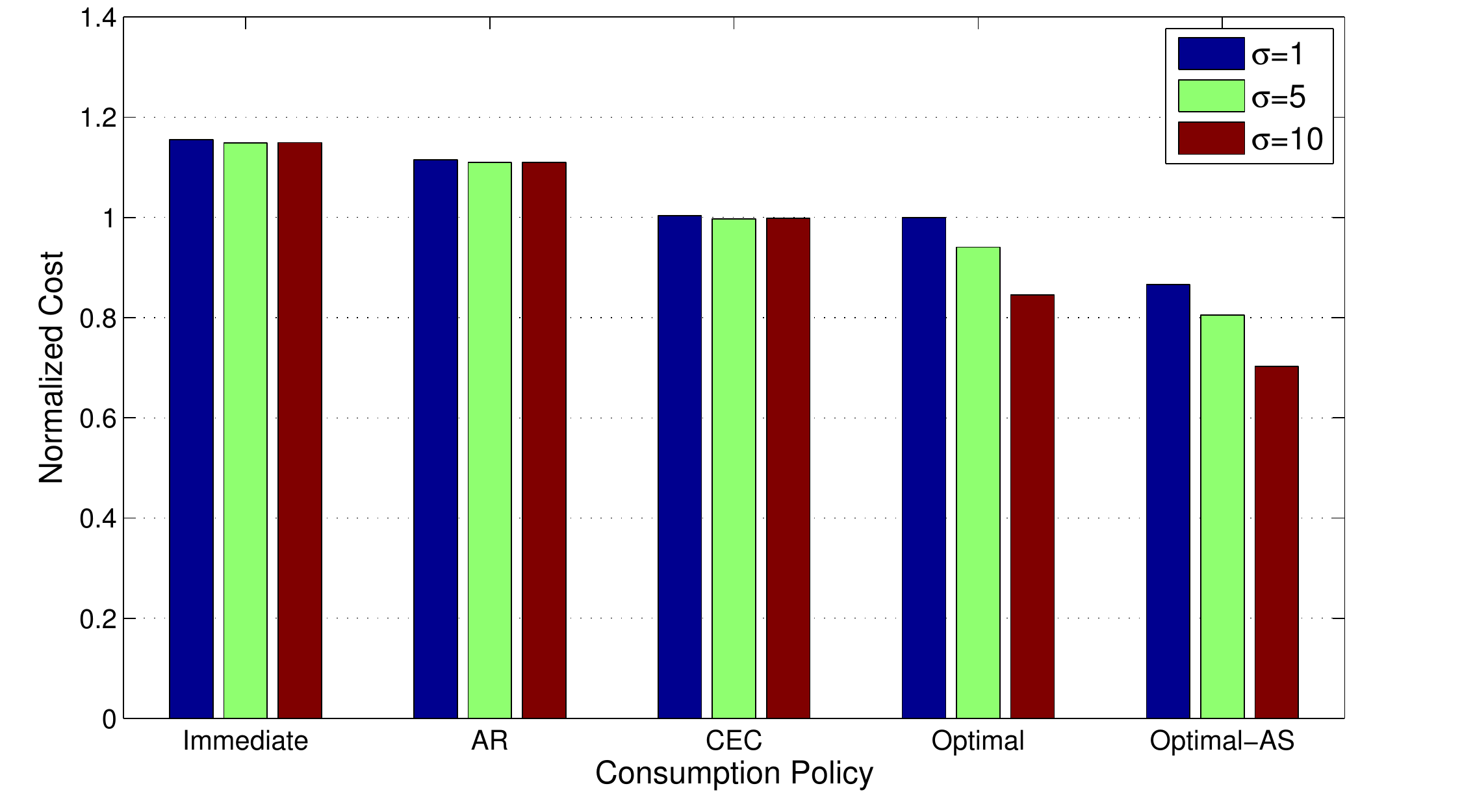}
\caption[Cost comparison, optimal with AS vs. others.]{Cost comparison between various consumption policies under different levels of uncertainty.}
\label{fig:coopt.cost.comp}
\end{figure}

\figname{\ref{fig:coopt.cost.comp}} demonstrates considerable reduction in total cost due to AS provision, roughly about 10\% to 15\%, for all uncertainty levels. Since this cost savings is accompanied by AS capacity offered to the grid equal to the amount of load served (since AS prices are always positive in practice and hence optimal amount of AS to offer is equal to the amount of consumption due to \thname{\ref{thm:coopt.uniform.cor}} and \eqname{\ref{eq:coopt.uniform.cap.policy.r}}), it is in the best interest of both load and the grid to adopt this method versus the optimal consumption, if possible. On the other hand, the added cost of infrastructure for receiving AS commands and reporting back to the grid, or localized AS provision in case of autonomous response, should be considered. Finally, \figname{\ref{fig:coopt.load.comp}} compares the average diurnal pattern of load induced by various consumption policies. Based on this figure, we observe no significant change in the pattern of load, mainly because the pattern of AS prices is very similar to the pattern of energy prices as depicted in the figure.

\begin{figure}[t]
\centering
\includegraphics[width=0.99\linewidth]{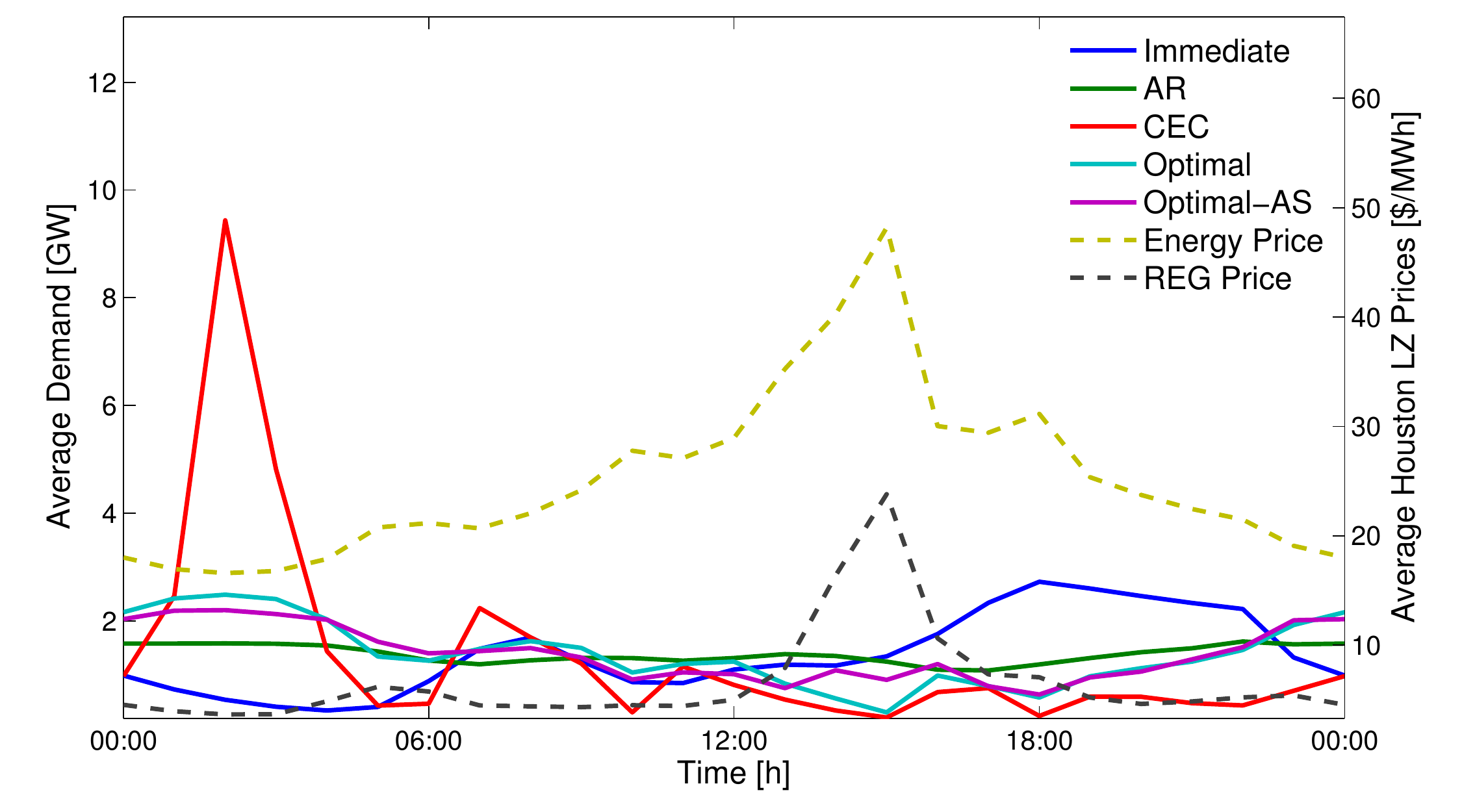}
\caption[Average diurnal load comparison, optimal with AS vs. others.]{Average diurnal load comparison between various consumption policies ($ \sigma=10 $).}
\label{fig:coopt.load.comp}
\end{figure}

\section{Conclusion\label{sec:coopt.conclusion}}
In this paper we extended our results in the \cite{kefayati_cdc_2012} for optimal response of flexible loads to the case where the loads can provide ancillary services in sub-market interval time frame and showed that a similar optimal policy structure holds. We particularly considered the case where no capacity reservation is needed for AS provision. Considering the case with capacity reservation is an ongoing future work.

Based on our performance evaluation on PEV load, we observed similar network load patters due to the high correlation of energy and AS prices. Moreover, we observed consistent cost reduction across the studied levels of uncertainty. Combined with the fact that AS provided by the flexible loads can ultimately help the grid, we conclude that AS providing optimal response should be adopted when communication infrastructure is readily available or economically justifiable.   

As a natural direction for this work, we are already working on the case with capacity reservation requirement. Our preliminary results suggest that the same optimal multi-threshold optimal policy structure holds and there exists a similar efficient method for calculating the thresholds. We plan to perfect this case as our future work. To extend this work further, we also consider AS providing optimal response as building block for approximating the coordinated energy delivery problem we studied in \cite{kefayati_efficient_2010} and \cite{kefayati_transaction_pricing_2011}. This is another direction we are planning to work on in the future.

\section*{Acknowledgments}
This work has been supported by EV-STS and NSF.
\bibliographystyle{tCON}
\bibliography{energy}

\appendices
\section{Proof of Theorem~\ref{thm:coopt.uniform.cor}\label{app:coopt.pf.uniform.cor}}
We establish that the proposed optimal value function satisfies the Bellman equation. We assume the CDFs and expected values exist throughout and the correlation structure is well behaved, which holds for most practical cases. Before going through the details, let us first establish some lemmas that help us streamline the proof. 

\begin{lem}\label{lem:coopt.fact}
For any $d$, $e$, $\overline{e}$ such that $0\leq d$, $0\leq\overline{e}$, $0\leq e \leq \overline{e}$, and letting $i=\lfloor\rfrac{d}{\overline{e}}\rfloor$, and $\tilde{d}=d-i\overline{e} $, we have:
\begin{enumerate}
\item $ [(d-e-(i-1)\overline{e})^+\wedge\overline{e}]=\overline{e}-(e-\tilde{d})^+$,
\item $(\tilde{d}{-}e)^+=\tilde{d}-(e\wedge\tilde{d})$.
\end{enumerate}
\end{lem}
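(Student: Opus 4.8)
The plan is to prove both identities by elementary sign analysis, using only two facts: the definition of $i$ as a floor forces $0\le\tilde{d}<\overline{e}$ (so in particular $\tilde{d}\ge 0$), and the hypothesis gives $0\le e\le\overline{e}$. Neither claim involves the price model; they are purely algebraic manipulations of the $(\cdot)^+$ and $\wedge$ operators that will later let me rewrite the Bellman transition in a clean form.

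I would dispatch the second identity first, since it is just the general fact $(a-b)^+=a-(a\wedge b)$ specialized to $a=\tilde{d}\ge 0$ and $b=e$. I verify it by splitting on the sign of $\tilde{d}-e$: if $e\le\tilde{d}$ then $e\wedge\tilde{d}=e$ and both sides equal $\tilde{d}-e\ge 0$; if $e>\tilde{d}$ then $e\wedge\tilde{d}=\tilde{d}$, the right-hand side is $0$, and $(\tilde{d}-e)^+=0$ as well. That is the entire argument.

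For the first identity the first step is to absorb the floor: since $d-(i-1)\overline{e}=(d-i\overline{e})+\overline{e}=\tilde{d}+\overline{e}$, the argument of the positive part collapses to $\tilde{d}+\overline{e}-e$. The key observation is then that this quantity is nonnegative, because $\tilde{d}\ge 0$ and $\overline{e}-e\ge 0$, so the outer $(\cdot)^+$ can simply be dropped, leaving $(\tilde{d}+\overline{e}-e)\wedge\overline{e}$. Finally I would apply $\min\{\overline{e}+x,\overline{e}\}=\overline{e}-(-x)^+$ with $x=\tilde{d}-e$, i.e. $\min\{\tilde{d}+\overline{e}-e,\overline{e}\}=\overline{e}-(e-\tilde{d})^+$, which is exactly the claimed right-hand side.

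There is no genuine obstacle here: the only point requiring care is tracking the sign of $\tilde{d}+\overline{e}-e$ so that the positive part is legitimately removable, and this is guaranteed by the two range constraints on $\tilde{d}$ and $e$. The lemma is really a bookkeeping step whose role is to isolate the entire dependence on the consumption $e$ into the single term $(e-\tilde{d})^+$; this is precisely what will make the later induction establishing continuity, piecewise-linearity, and convexity of the value function go through cleanly.
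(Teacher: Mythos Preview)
Your proof is correct and follows essentially the same sign-analysis approach as the paper: both arguments reduce Part~1 via $d-(i-1)\overline{e}=\tilde{d}+\overline{e}$ and then case-split, and both handle Part~2 by the two cases $e\le\tilde{d}$ and $e>\tilde{d}$. Your treatment of Part~1 is in fact slightly more economical, since by noting $\tilde{d}+\overline{e}-e\ge 0$ upfront you drop the positive part immediately and avoid the paper's degenerate first case.
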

\begin{proof}
\begin{enumerate}
\item First observe that the left hand side (LHS) can be simplified as:
$$ [(d-e-(i-1)\overline{e})^+\wedge\overline{e}] = [(\tilde{d}-e+\overline{e})^+\wedge\overline{e}].$$
Considering the left hand side, only three cases are possible:
\begin{enumerate}
\item $ \tilde{d}-e+\overline{e} \leq 0$: In this case, LHS=0. This condition can be rearranged as $\overline{e} \leq e-\tilde{d}$. But given the conditions in the assumption, this can only happen if $\overline{e} = e-\tilde{d}$. Consequently, the right hand side (RHS) results in:
$$ \overline{e}-(e-\tilde{d})^+ = \overline{e} - \overline{e} = 0.$$
\item $ 0 \leq \tilde{d}-e+\overline{e} \leq \overline{e}$: In this case, LHS=$\tilde{d}-e+\overline{e}$. Rearranging this condition results in $0 \leq e-\tilde{d}\leq \overline{e}$, which results in:
$$ \overline{e}-(e-\tilde{d})^+ = \overline{e} - e + \tilde{d} = \text{LHS}.$$
\item $ \overline{e} \leq \tilde{d}-e+\overline{e}$: In this case, LHS=$\overline{e}$. This case can be also rearranged to $ e-\tilde{d} \leq 0$, and hence:
$$ \overline{e}-(e-\tilde{d})^+ = \overline{e} - 0 = \text{LHS}.$$
\end{enumerate}
\item For this part, only two cases can happen, if $ \tilde{d}{-}e \geq 0 $, then:
$$(\tilde{d}{-}e)^+=\tilde{d}-e= \tilde{d}-(e\wedge\tilde{d}),$$
otherwise,
$$(\tilde{d}{-}e)^+=0 = \tilde{d}-\tilde{d} = \tilde{d}-(e\wedge\tilde{d}).$$
\end{enumerate}

\end{proof}

\begin{lem}\label{lem:coopt.e.split}
For any  $\tilde{d}$, $e$ such that $0\leq e$, $0\leq\tilde{d}$,  we have $e= (e-\tilde{d})^+ +(e\wedge\tilde{d})$.
\end{lem}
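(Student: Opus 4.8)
The plan is to prove the identity by a single case split at the breakpoint $e=\tilde{d}$, which is the only point where either of the two operators on the right-hand side changes its form. Both $(e-\tilde{d})^+$ and $e\wedge\tilde{d}$ are piecewise linear in $e$ (for fixed $\tilde{d}$) with a single kink at $e=\tilde{d}$, so comparing $e$ against $\tilde{d}$ resolves everything. Note that the hypotheses $0\leq e$ and $0\leq\tilde{d}$ are not even needed for the algebra to go through; they are presumably retained only because this lemma is applied within the Bellman-equation proof where $e$ and $\tilde{d}=d-i\overline{e}$ are genuinely nonnegative.

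Concretely, I would argue as follows. If $e\leq\tilde{d}$, then $e-\tilde{d}\leq 0$ gives $(e-\tilde{d})^+=0$ and $e\wedge\tilde{d}=e$, so the right-hand side is $0+e=e$. If instead $e\geq\tilde{d}$, then $e-\tilde{d}\geq 0$ gives $(e-\tilde{d})^+=e-\tilde{d}$ and $e\wedge\tilde{d}=\tilde{d}$, so the right-hand side is $(e-\tilde{d})+\tilde{d}=e$. The boundary case $e=\tilde{d}$ is covered consistently by both branches, which confirms there is no gap or double-counting at the kink. In either case the right-hand side collapses to $e$, establishing the claim.

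There is no real obstacle here: the statement is the elementary observation that any value $e$ splits uniquely into the portion lying at or below the threshold $\tilde{d}$, namely $e\wedge\tilde{d}$, plus the overshoot above it, namely $(e-\tilde{d})^+$. I expect this identity to be used in tandem with Lemma~\ref{lem:coopt.fact} to rewrite the one-step consumption term $e$ and the residual demand $(\tilde{d}-e)^+$ in a form where the per-slot structure of the value function \eqref{eq:coopt.uniform.cap.val} becomes transparent, so the only care required is to keep the decomposition aligned with the $\overline{e}$-length pieces indexed by $i=\lfloor d/\overline{e}\rfloor$.
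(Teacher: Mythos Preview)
Your proof is correct and matches the paper's argument essentially verbatim: the paper also performs the single case split $e\leq\tilde d$ versus $e>\tilde d$ and checks that the right-hand side collapses to $e$ in each case. Your side remarks about the hypotheses being unnecessary and about the lemma's role in the Bellman-equation manipulation are accurate extras.
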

\begin{proof}
If $e \leq \tilde{d}$:
$$ (e-\tilde{d})^+ +(e\wedge\tilde{d}) = 0 + e = e,$$
otherwise:
$$ (e-\tilde{d})^+ +(e\wedge\tilde{d}) = e-\tilde{d} + \tilde{d} = e.$$
\end{proof}

We skip establishing the convexity of the optimal value function for brevity. As in the proof of \thname{\ref{thm:optresp.uniform.cor}} in \appname{\ref{app:optresp.pf.uniform.cor}}, a proof similar to Proposition 1 in \cite{papavasiliou_supplying_2010} can be applied using convexity of the stage and final costs and linearity of dynamics.

The main proof is based on backward induction on $t$; to this end, let us assume that the proposed following form of the value function in Theorem \ref{thm:coopt.uniform.cor} holds:
\begin{equation}
\begin{split}\label{eq:coopt.val.proof.gen}
J^*_{t}(d_{t},\mb{\theta}_{t})=&\sum_{j=1}^{\infty} m^{j}_{t}(\mb{\theta}_{t})[(d_{t}-j\overline{e})^+\wedge\overline{e}]. 
\end{split}
\end{equation}
where $m^{j}_{t}(\mb{\theta})=m^{T}_t$ for $j\geq T$. We need to show that if this assumption holds for $t+1$, then it holds for $t$. These consecutive time slots are linked by the Bellman equation; that is:
\begin{equation}
\label{eq:coopt.belman.pf}
J^*_t(d_t,\mb{\theta}_t)=\E{\mb{\epsilon}_{t}}{\min_{e,r}\left\{\pi_{t}^{e} e - \pi_{t}^{r} r + J^*_{t+1}(d_t-e,\mb{\pi}_t)\right\}},
\end{equation}
using system dynamics equations and $\mb{\theta}_{t+1}=\mb{\pi}_t$ as assumed previously. Define $i=\lfloor\rfrac{d_t}{\overline{e}}\rfloor$, $i\overline{e}\leq d_t < (i+1)\overline{e}$. Our objective is to obtain $m^{i}_{t}(\mb{\theta})$ in the desired optimal value function; which in this case can be rewritten as:
\begin{equation}\label{eq:coopt.val.in.proof}
\begin{split}
J^*_{t}(d_t,\mb{\theta}_{t})=&\sum_{j=1}^{i-1} m^{j}_{t}(\mb{\theta}_{t})\overline{e}+m^{i}_{t}(\mb{\theta}_{t})\tilde{d}_t,
\end{split}
\end{equation}
where $ \tilde{d}_t\triangleq d_t-i\overline{e} $. Let us first rearrange the $ J^*_{t+1}(d_{t}-e,\mb{\pi}_t) $ term in \eqref{eq:coopt.belman.pf}. Since $0\leq e\leq \overline{e}$, $(i-1)\overline{e}\leq d_t-e\leq (i+1)\overline{e}$. Using \eqref{eq:coopt.val.proof.gen} for $t+1$, by the induction hypothesis and similar to \eqref{eq:coopt.val.in.proof}, we have:
\begin{equation}
\begin{split}
J^*_{t+1}(d_{t}-e,\mb{\pi}_t)&=\sum_{j=1}^{i-2} m^{j}_{t+1}(\mb{\pi}_t)\overline{e}\\
                   &\quad + m^{i-1}_{t+1}(\mb{\pi}_t)[(d_{t}-e-(i-1)\overline{e})^+\wedge\overline{e}]\\
                   &\quad + m^{i}_{t+1}(\mb{\pi}_t)(d_{t}-e-i\overline{e})^+,\\
                   &=\sum_{j=1}^{i-1} m^{j}_{t+1}(\mb{\pi}_t)\overline{e}+m^{i}_{t+1}(\mb{\pi}_t)\tilde{d}_t\\
                   &\quad-m^{i-1}_{t+1}(\mb{\pi}_t)(e-\tilde{d}_t)^+ - m^{i}_{t+1}(\mb{\pi}_t)(e\wedge\tilde{d}_t),
\end{split}\label{eq:coopt.Jt+1}
\end{equation}
where we have used Lemma \ref{lem:coopt.fact} to obtain the second equality. Now, let us substitute \eqref{eq:coopt.Jt+1} in \eqref{eq:coopt.belman.pf} as depicted in \eqref{eq:coopt.pf.steps}: 
\begin{align*}\numberthis\label{eq:coopt.pf.steps}
J^*_t(d_t,\mb{\theta}_t)&=\EO{\mb{\epsilon}_{t}}\left[\min_{e,r}\mathrlap{\left\{\pi_t^e e - \pi_t^r r + \sum_{j=1}^{i-1} m^{j}_{t+1}(\mb{\pi}_t)\overline{e} + m^{i}_{t+1}(\mb{\pi}_t)\tilde{d}_{t}\right.}\right.&\\
&&\left.\left.\vphantom{\sum_{j=1}^{i-1}} - m^{i-1}_{t+1}(\mb{\pi}_t)(e - \tilde{d}_{t})^+ - m^{i}_{t+1}(\mb{\pi}_t)[e\wedge\tilde{d}_{t}]\right\}\right]\displaybreak[0]\\
&\stackrel{(a)}{=}\EO{\mb{\epsilon}_{t}}\left[\min_{e,r}\left\{\vphantom{\sum_{j=1}^{i-1}}\mathrlap{\pi_t^e e - \pi_t^r r} \right.\right.\\
&& \left.\left.\vphantom{\sum_{j=1}^{i-1}}- m^{i-1}_{t+1}(\mb{\pi}_t)(e - \tilde{d}_{t})^+ - m^{i}_{t+1}(\mb{\pi}_t)(e\wedge\tilde{d}_{t})\right\}\right.\quad\\
&&\left. + \sum_{j=1}^{i-1} m^{j}_{t+1}(\mb{\pi}_t)\overline{e} + m^{i}_{t+1}(\mb{\pi}_t)\tilde{d}_{t}\right]\displaybreak[0]\\
&\stackrel{(b)}{=}\EO{\mb{\epsilon}_{t}}\left[\min_{e,r}\left\{\vphantom{\sum_{j=1}^{i-1}}\mathrlap{(\pi_t^e- m^{i-1}_{t+1}(\mb{\pi}_t))(e - \tilde{d}_{t})^+}\right.\right.\\
&&\left.\left.\vphantom{\sum_{j=1}^{i-1}} + (\pi_t^e - m^{i}_{t+1}(\mb{\pi}_t))(e\wedge\tilde{d}_{t}) - \pi_t^r r \right\}\right.\quad\\
&& \left. + \sum_{j=1}^{i-1} m^{j}_{t+1}(\mb{\pi}_t)\overline{e} + m^{i}_{t+1}(\mb{\pi}_t)\tilde{d}_{t}\right]\displaybreak[0]\\
&\stackrel{(c)}{=}\EO{\mb{\epsilon}_{t}}\left[\min_{e}\left\{\vphantom{\sum_{j=1}^{i-1}}\mathrlap{(\pi_t^e - (\pi_t^r)^+ - m^{i-1}_{t+1}(\mb{\pi}_t)) (e - \tilde{d}_{t})^+ }\right.\right.\\
&&\left.\left. \vphantom{\sum_{j=1}^{i-1}}+ (\pi_t^e- (\pi_t^r)^+ - m^{i}_{t+1}(\mb{\pi}_t))(e\wedge\tilde{d}_{t})\right\}\right.\quad\\
&&\left. + \sum_{j=1}^{i-1} m^{j}_{t+1}(\mb{\pi}_t)\overline{e} + m^{i}_{t+1}(\mb{\pi}_t)\tilde{d}_{t}\right]\\
\end{align*}
where, (a) is obtained by rearranging terms, (b) is obtained by using Lemma \ref{lem:coopt.e.split} and factoring common terms. To obtain (c), notice that the minimization problem in $r$ can be tackled directly, that is, if its coefficient, ($\pi_t^r$) is positive, then we want to maximize $r$ and minimize it otherwise. But we already know the $0\leq r\leq e$, hence, we can obtain the optimal reserve offering policy as: 
\begin{equation}
r^*_{t}(d,\mb{\pi}_{t})= e^*_{t}(d,\mb{\pi}_{t})*\ind{\pi_{t}^{r} \geq 0}.\label{eq:coopt.uniform.cap.policy.r.proof}
\end{equation}
Now (c) is obtained by using \eqref{eq:coopt.uniform.cap.policy.r.proof}, essentially substituting $r$ with $e$ with proper conditionals on $\pi_t^r$, namely its positivity. The result of \eqref{eq:coopt.pf.steps} leaves us with a much simpler optimization problem since it is all in terms of $e$, which is essentially cut into two pieces: $(e - \tilde{d}_{t})^+$ and $(e\wedge\tilde{d}_{t})$ using Lemma \ref{lem:coopt.e.split}. The problem at hand is essentially a linear programing problem, however, using the convexity of the value function, we can parametrically solve it, basically by inspection. Since the value function is piecewise linear, its convexity is equivalent to  $m^{j-1}_{t+1}(\mb{\pi}_t)\leq m^{j}_{t+1}(\mb{\pi}_t), \forall j$; noting that $m^{j}_{t+1}(\mb{\pi}_t)$ is the slope of the $j$th piece of the value function. This leaves us with essentially three cases to consider for the minimization problem at hand, which we denote them by events $\mathcal{E}_1$, $\mathcal{E}_2$ and $\mathcal{E}_3$:
\begin{eqnarray}
\mathcal{E}_1: & m^{i}_{t+1}(\mb{\pi}_t) \leq& \pi_t^e - (\pi_t^r)^+,\label{eq:coopt.ev1}\\
\mathcal{E}_2: & m^{i-1}_{t+1}(\mb{\pi}_t) \leq& \pi_t^e - (\pi_t^r)^+ \leq m^{i}_{t+1}(\mb{\pi}_t),\label{eq:coopt.ev2}\\
\mathcal{E}_3: && \pi_t^e - (\pi_t^r)^+ \leq m^{i-1}_{t+1}(\mb{\pi}_t).\label{eq:coopt.ev3}
\end{eqnarray}
Note that in each of these conditions, $\mb{\pi}_t$ appears on both sides and hence the above conditions are implicitly defined. Moreover, notice that each of these events consist of two simple events corresponding to positivity of $\pi_t^r$. Conditioned on each of these events, it is straightforward to solve the final minimization problem in \eqref{eq:coopt.pf.steps}, understanding that the only constraint we are facing is $0\leq e\leq \overline{e}$: Under $\mathcal{E}_1$, none of the coefficients in are positive and hence, the optimal decision is to minimize $e$. Under $\mathcal{E}_2$, only the second coefficient is negative and hence the optimal decision is to maximize the second term, i.e. $e^*=\tilde{d}_t$. Finally, under $\mathcal{E}_3$, both coefficients are negative and hence the optimal decision is to maximize $e$, i.e. $e^*=\overline{e}$. This basically gives us the optimal policy as: 
\begin{equation}\label{eq:coopt.optimal.policy.short}
e^*(d_t,\mb{\pi}_t){=}\begin{cases}[@{}l@{\qquad}l@{:\quad}r@{}l@{}]
0                 & \text{if } (\mathcal{E}_1) & m^{i}_{t+1}(\mb{\pi}_t) \leq& \pi_t^e - (\pi_t^r)^+,\\
d{-}i\overline{e}        & \text{if } (\mathcal{E}_2) & m^{i-1}_{t+1}(\mb{\pi}_t) \leq& \pi_t^e - (\pi_t^r)^+ \leq m^{i}_{t+1}(\mb{\pi}_t),\\
\overline{e}           & \text{if } (\mathcal{E}_3) &&\pi_t^e - (\pi_t^r)^+ \leq m^{i-1}_{t+1}(\mb{\pi}_t).
\end{cases}
\end{equation}
As mentioned before, the conditions defining these events are implicit. Therefore, we need to solve the following equation to obtain the explicit conditions:
\begin{equation}
m_{t+1}^{j}(\pi_t^e,\pi_t^r)=\pi_t^e - (\pi_t^r)^+\label{eq:coopt.fpf.pf}
\end{equation}
Defining $\hat{m}_{t+1}^{j}$ as:
\begin{equation}
\hat{m}_{t+1}^{j}=\{m_{t+1}^{j}(\pi_t^e,\pi_t^r)| m_{t+1}^{j}(\pi_t^e,\pi_t^r)=\pi_t^e - (\pi_t^r)^+\},\label{eq:coopt.fp.pf}
\end{equation}
similar to \eqref{eq:coopt.rec.m.eq}, we can make these conditions explicit. The desired form in \eqref{eq:coopt.uniform.cap.policy.e}, is then obtained by paying attention to the fact that $\hat{m}^{j}_{t+1}$ is increasing in $j$ by convexity and hence there exists $i^*$ such that $\pi_t^e - (\pi_t^r)^+ < \hat{m}_{t+1}^{i^*},\: \forall j>i^* $ and therefore, we can formulate the optimal policy as stated in the theorem.

Plugging in the optimal policy in \eqref{eq:coopt.pf.steps}, we can continue the proof. Conditioning based on the $ \mathcal{E}_k $ events, we have:
\begin{equation}\label{eq:coopt.J.rec.pf}
J^*_t(d_t,\mb{\theta}_t){=}J^*_t(d_t,\mb{\theta}_t|\mathcal{E}_1)\Prob{\mathcal{E}_1}+J^*_t(d_t,\mb{\theta}_t|\mathcal{E}_2)\Prob{\mathcal{E}_2}+J^*_t(d_t,\mb{\theta}_t|\mathcal{E}_3)\Prob{\mathcal{E}_3},
\end{equation}
where,
\begin{equation*}
\begin{split}
J^*_t(d_t,\mb{\theta}_t|\mathcal{E}_1)&{=}\sum_{j=1}^{i-1} \E{\mb{\epsilon}_t}{m^{j}_{t+1}(\mb{\pi}_t)|\mathcal{E}_1}\overline{e} {+}\E{\mb{\epsilon}_t}{m^{i}_{t+1}(\mb{\pi}_t)|\mathcal{E}_1}\tilde{d}_t,\\
J^*_t(d_t,\mb{\theta}_t|\mathcal{E}_2)&{=}\sum_{j=1}^{i-1} \E{\mb{\epsilon}_t}{m^{j}_{t+1}(\mb{\pi}_t)|\mathcal{E}_2}\overline{e} {+}\E{\mb{\epsilon}_t}{\pi_t^e - (\pi_t^r)^+|\mathcal{E}_2}\tilde{d}_t,\\
J^*_t(d_t,\mb{\theta}_t|\mathcal{E}_3)&{=}\sum_{j=1}^{i-2} \E{\mb{\epsilon}_t}{m^{j}_{t+1}(\mb{\pi}_t)|\mathcal{E}_3}\overline{e} {+}\E{\mb{\epsilon}_t}{\pi_t^e - (\pi_t^r)^+|\mathcal{E}_3}\overline{e} {+} \E{\mb{\epsilon}_t}{m^{i-1}_{t+1}(\mb{\pi}_t)|\mathcal{E}_3}\tilde{d}_t.
\end{split}
\end{equation*}

It is now clear that the optimal value function has the desired form and what remains is to calculate the coefficient of $\tilde{d}_t$, i.e. $m^{i}_{t}(\mb{\theta}_t)$, to obtain the full recursion and conclude the proof. To this end, we use \eqref{eq:coopt.J.rec.pf} and combine the three cases we introduced previously:
\begin{equation}\label{eq:coopt.rec.m.cor.pf}
\begin{split}
m^{i}_{t}(\mb{\theta}_t)&{=}\E{\mb{\epsilon}_t}{m^{i}_{t+1}(\mb{\pi}_t)|\mathcal{E}_1}\Prob{\mathcal{E}_1} {+}\E{\mb{\epsilon}_t}{\pi_t^e - (\pi_t^r)^+|\mathcal{E}_2}\Prob{\mathcal{E}_2} {+}\E{\mb{\epsilon}_t}{m^{i-1}_{t+1}(\mb{\pi}_t)|\mathcal{E}_3}\Prob{\mathcal{E}_3}\\
                   &{=} \E{\mb{\epsilon}_t}{m^{i}_{t+1}(\mb{\lambda}_t(\mb{\theta}_t)+\mb{\epsilon}_t)|\mathcal{E}_1}\Prob{\mathcal{E}_1}\\ 
                   &\quad {+}\E{\mb{\epsilon}_t}{\lambda_t^e(\mb{\theta}_t) + \epsilon_t^e - (\lambda_t^e(\mb{\theta}_t)+\epsilon_t^r)^+|\mathcal{E}_2}\Prob{\mathcal{E}_2}\\
                   &\quad {+}\E{\mb{\epsilon}_t}{m^{i-1}_{t+1}(\mb{\lambda}_t(\mb{\theta}_t)+\mb{\epsilon}_t)|\mathcal{E}_3}\Prob{\mathcal{E}_3}\\
                   &{=} \E{\mb{\epsilon}_t}{M_{i}(\mb{\theta}_t,\mb{\epsilon}_t)},
\end{split}
\end{equation}
where we have used the price evolution equation we defined in \eqref{sys:coopt.price.evol} and the definition of $M(\mb{\theta}_t,\mb{\epsilon}_t)$ from \eqref{eq:coopt.def.uniform.M}. The proof is completed by backward induction over $t$ where \eqref{eq:coopt.uniform.cap.val} is obtained at $t=0$.
\qed

\section{Proof of Theorem~\ref{thm:coopt.uniform.indep}\label{app:coopt.pf.uniform.indep}}
With independent prices, we have $ \mb{\pi}_t=\mb{\epsilon}_t $. Moreover, remaining demand, $ d_t $, is the only element of state space. Therefore, the proposed form of the value function is automatically obtained since $m^{i}_{t}(\mb{\theta}_t)$ is no longer a function of $ \mb{\theta}_t $ and hence $m^{i}_{t}(\mb{\theta}_t)=\hat{m}^{i}_{t}$. This is because $M_i(\mb{\theta}_t,\mb{\epsilon}_t)$ is no longer a function of $ \mb{\theta}_t $, i.e. $M_i(\mb{\theta}_t,\mb{\epsilon}_t)=M_i(\mb{\epsilon}_t)$ and hence there would be no need to solve \eqref{eq:coopt.rec.m.eq}. Consequently, equations \eqref{eq:coopt.rec.uniform.cap}, \eqref{eq:coopt.def.uniform.M} and \eqref{eq:coopt.rec.m.eq} can be consolidated into a single recursion and conditional expected values can be approached directly. This is essentially what we establish in this proof.

First let us define $ \epsilon^a_t=\pi^a_t\triangleq \pi_t^{e}-(\pi_t^{r})^{+}$ as the effective price variable. Now, let us revisit the definition of $ M_i(\mb{\epsilon}_t) $, in this case:
\begin{equation}\label{eq:coopt.def.uniform.M.pf}
M_i(\mb{\epsilon}_{t})=\begin{cases}[@{}l@{\qquad}r@{}l@{}]
\hat{m}^{i}_{t+1} & \hat{m}^{i}_{t+1}\leq& \epsilon^a_t,\\
\epsilon^a_t & \hat{m}^{i-1}_{t+1}\leq&\epsilon^a_t < \hat{m}^{i}_{t{+}1},\\
\hat{m}^{i-1}_{t+1} & &\epsilon^a_t < \hat{m}^{i-1}_{t+1}.
\end{cases}
\end{equation}
Note that the three cases in the above definition corresponds to the three events $ \mathcal{E}_1 $, $ \mathcal{E}_2 $ and $ \mathcal{E}_3 $ defined in \eqref{eq:coopt.ev1}, \eqref{eq:coopt.ev2} and \eqref{eq:coopt.ev3} respectively. Now, we can use this notation to obtain a closed form for \eqref{eq:coopt.rec.m.eq}, which would essentially be \eqref{eq:coopt.rec.m.eq.indep}. Starting with \eqref{eq:coopt.rec.m.eq}:

\begin{equation}\label{eq:coopt.indep.pf.m}
\begin{split}
\hat{m}^{i}_{t}&=\E{\mb{\epsilon}_t}{M_i(\mb{\epsilon})}\\
&=\E{\mb{\epsilon}_t}{\hat{m}^{i}_{t+1}|\mathcal{E}_1}\Prob{\mathcal{E}_1} +\E{\mb{\epsilon}_t}{\epsilon^a_t|\mathcal{E}_2}\Prob{\mathcal{E}_2} +\E{\mb{\epsilon}_t}{\hat{m}^{i-1}_{t+1}|\mathcal{E}_3}\Prob{\mathcal{E}_3}\\
&=\hat{m}^{i}_{t+1}\Prob{\mathcal{E}_1} +\underbrace{\E{\mb{\epsilon}_t}{\epsilon^a_t|\mathcal{E}_2}\Prob{\mathcal{E}_2}}_{A} + \hat{m}^{i-1}_{t+1}\Prob{\mathcal{E}_3}\\
\end{split}
\end{equation}

Now, using integration by parts:
\begin{equation*}
\begin{split}
A&=\int_{\hat{m}^{i-1}_{t+1}}^{\hat{m}^{i}_{t+1}} \zeta\, dF^a_t(\zeta)\\
 &=  \hat{m}^{i}_{t+1}F^a_t(\hat{m}^{i}_{t+1})-\hat{m}^{i-1}_{t+1}F^a_t(\hat{m}^{i-1}_{t+1})-\int_{\hat{m}^{i-1}_{t+1}}^{\hat{m}^{i}_{t+1}} F^a_{t}(\zeta)\, d\zeta\\
 &= \hat{m}^{i}_{t+1}\Prob{\overline{\mathcal{E}}_1}-\hat{m}^{i-1}_{t+1}\Prob{\mathcal{E}_3}-G_t(\hat{m}^{i-1}_{t+1},\hat{m}^{i}_{t+1}),
\end{split}
\end{equation*}
where $\overline{\mathcal{E}}_1$ is the complement of event $\mathcal{E}_1$ and we have used definition \eqref{eq:coopt.def.G2}. Plugging back for $A$ in \eqref{eq:coopt.indep.pf.m}, we get:
\begin{equation*}
\begin{split}
\hat{m}^{i}_{t}
&=\hat{m}^{i}_{t+1}\Prob{\mathcal{E}_1} +\hat{m}^{i}_{t+1}\Prob{\overline{\mathcal{E}}_1}-\hat{m}^{i-1}_{t+1}\Prob{\mathcal{E}_3}-G_t(\hat{m}^{i-1}_{t+1},\hat{m}^{i}_{t+1}) + \hat{m}^{i-1}_{t+1}\Prob{\mathcal{E}_3}\\
&= \hat{m}^{i}_{t+1}-G_t(\hat{m}^{i-1}_{t+1},\hat{m}^{i}_{t+1}),
\end{split}
\end{equation*}
which is the desired result.
\qed

\label{lastpage}

\end{document}